\newtheorem{axiom}[theorem]{Axiom}
\newtheorem{conjecture}[theorem]{Conjecture}
\newtheorem{example}[theorem]{Example}
\newtheorem{exercise}[theorem]{Exercise}
\newtheorem{remark}[theorem]{Remark}
\newcommand{\bea}{\begin{eqnarray}}
\newcommand{\eea}{\end{eqnarray}}
\newcommand{\beas}{\begin{eqnarray*}}
\newcommand{\eeas}{\end{eqnarray*}}
\newcommand{\mcK}{\ensuremath{\mathcal{K}}}
\newcommand{\bA}{\ensuremath{\bm{A}}}
\newcommand{\bv}{\ensuremath{\bm{v}}}
\newcommand{\bx}{\ensuremath{\bm{x}}}
\newcommand{\bz}{\ensuremath{\bm{z}}}
\newcommand{\by}{\ensuremath{\bm{y}}}
\newcommand{\be}{\ensuremath{\bm{e}}}
\newcommand{\oS}{\ensuremath{\overline{S}}}
\newcommand{\ob}{\ensuremath{\overline{b}}}
\newcommand{\oa}{\ensuremath{\overline{a}}}
\newcommand{\cU}{\ensuremath{\mathcal{U}}}
\newcommand{\cK}{\ensuremath{\mathcal{K}}}
\def\qed{\hbox{\vrule width 6pt height 6pt depth 0pt}}
\providecommand{\BOXEDSPECIAL}[4]{\hbox to #2{\raise #3\hbox to #2{\null #1\hfil}}}
\chardef\@x10\chardef\@xv60
\def\tcitime{
\def\@time{%
  \@minute\time\@hour\@minute\divide\@hour\@xv
  \ifnum\@hour<\@x 0\fi\the\@hour:%
  \multiply\@hour\@xv\advance\@minute-\@hour
  \ifnum\@minute<\@x 0\fi\the\@minute
  }}%
\def\QCTOpt[#1]#2{%
  \def\QCTOptB{#1}
  \def\QCTOptA{#2}
}
\def\QCTNOpt#1{%
  \def\QCTOptA{#1}
  \let\QCTOptB\empty
}
\def\Qct{%
  \@ifnextchar[{%
    \QCTOpt}{\QCTNOpt}
}
\def\QCBOpt[#1]#2{%
  \def\QCBOptB{#1}
  \def\QCBOptA{#2}
}
\def\QCBNOpt#1{%
  \def\QCBOptA{#1}
  \let\QCBOptB\empty
}
\def\Qcb{%
  \@ifnextchar[{%
    \QCBOpt}{\QCBNOpt}
}
\def\PrepCapArgs{%
  \ifx\QCBOptA\empty
    \ifx\QCTOptA\empty
      {}%
    \else
      \ifx\QCTOptB\empty
        {\QCTOptA}%
      \else
        [\QCTOptB]{\QCTOptA}%
      \fi
    \fi
  \else
    \ifx\QCBOptA\empty
      {}%
    \else
      \ifx\QCBOptB\empty
        {\QCBOptA}%
      \else
        [\QCBOptB]{\QCBOptA}%
      \fi
    \fi
  \fi
}
\def\GRAPHICSPS#1{%
 \ifcase\GRAPHICSTYPE
   \special{ps: #1}%
 \or
   \special{language "PS", include "#1"}%
 \fi
}%
\def\graffile#1#2#3#4#5{%
    \bgroup
    \leavevmode
    \@ifundefined{bbl@deactivate}{\def~{\string~}}{\activesoff}
    \raise -#4 \BOXTHEFRAME{%
       \BOXEDSPECIAL{#1}{#2}{#3}{#5}}%
    \egroup
}%
\def\draftbox#1#2#3#4{%
 \leavevmode\raise -#4 \hbox{%
  \frame{\rlap{\protect\tiny #1}\hbox to #2%
   {\vrule height#3 width\z@ depth\z@\hfil}%
  }%
 }%
}%
\newif\ifwasdraft
\def\GRAPHIC#1#2#3#4#5{%
 \ifnum\draft=\@ne\draftbox{#2}{#3}{#4}{#5}%
  \else\graffile{#1}{#3}{#4}{#5}{#2}%
  \fi
 }%
\def\addtoLaTeXparams#1{%
    \edef\LaTeXparams{\LaTeXparams #1}}%
\newif\ifBoxFrame \BoxFramefalse
\newif\ifOverFrame \OverFramefalse
\newif\ifUnderFrame \UnderFramefalse
\def\BOXTHEFRAME#1{%
   \hbox{%
      \ifBoxFrame
         \frame{#1}%
      \else
         {#1}%
      \fi
   }%
}
\def\doFRAMEparams#1{\BoxFramefalse\OverFramefalse\UnderFramefalse\readFRAMEparams#1\end}%
\def\readFRAMEparams#1{%
 \ifx#1\end%
  \let\next=\relax
  \else
  \ifx#1i\dispkind=\z@\fi
  \ifx#1d\dispkind=\@ne\fi
  \ifx#1f\dispkind=\tw@\fi
  \ifx#1t\addtoLaTeXparams{t}\fi
  \ifx#1b\addtoLaTeXparams{b}\fi
  \ifx#1p\addtoLaTeXparams{p}\fi
  \ifx#1h\addtoLaTeXparams{h}\fi
  \ifx#1X\BoxFrametrue\fi
  \ifx#1O\OverFrametrue\fi
  \ifx#1U\UnderFrametrue\fi
  \ifx#1w
    \ifnum\draft=1\wasdrafttrue\else\wasdraftfalse\fi
    \draft=\@ne
  \fi
  \let\next=\readFRAMEparams
  \fi
 \next
 }%
\def\IFRAME#1#2#3#4#5#6{%
      \bgroup
      \let\QCTOptA\empty
      \let\QCTOptB\empty
      \let\QCBOptA\empty
      \let\QCBOptB\empty
      #6%
      \parindent=0pt%
      \leftskip=0pt
      \rightskip=0pt
      \setbox0 = \hbox{\QCBOptA}%
      \@tempdima = #1\relax
      \ifOverFrame
          \typeout{This is not implemented yet}%
          \show\HELP
      \else
         \ifdim\wd0>\@tempdima
            \advance\@tempdima by \@tempdima
            \ifdim\wd0 >\@tempdima
               \textwidth=\@tempdima
               \setbox1 =\vbox{%
                  \noindent\hbox to \@tempdima{\hfill\GRAPHIC{#5}{#4}{#1}{#2}{#3}\hfill}\\%
                  \noindent\hbox to \@tempdima{\parbox[b]{\@tempdima}{\QCBOptA}}%
               }%
               \wd1=\@tempdima
            \else
               \textwidth=\wd0
               \setbox1 =\vbox{%
                 \noindent\hbox to \wd0{\hfill\GRAPHIC{#5}{#4}{#1}{#2}{#3}\hfill}\\%
                 \noindent\hbox{\QCBOptA}%
               }%
               \wd1=\wd0
            \fi
         \else
            \ifdim\wd0>0pt
              \hsize=\@tempdima
              \setbox1 =\vbox{%
                \unskip\GRAPHIC{#5}{#4}{#1}{#2}{0pt}%
                \break
                \unskip\hbox to \@tempdima{\hfill \QCBOptA\hfill}%
              }%
              \wd1=\@tempdima
           \else
              \hsize=\@tempdima
              \setbox1 =\vbox{%
                \unskip\GRAPHIC{#5}{#4}{#1}{#2}{0pt}%
              }%
              \wd1=\@tempdima
           \fi
         \fi
         \@tempdimb=\ht1
         \advance\@tempdimb by \dp1
         \advance\@tempdimb by -#2%
         \advance\@tempdimb by #3%
         \leavevmode
         \raise -\@tempdimb \hbox{\box1}%
      \fi
      \egroup%
}%
\def\DFRAME#1#2#3#4#5{%
 \begin{center}
     \let\QCTOptA\empty
     \let\QCTOptB\empty
     \let\QCBOptA\empty
     \let\QCBOptB\empty
     \ifOverFrame 
        #5\QCTOptA\par
     \fi
     \GRAPHIC{#4}{#3}{#1}{#2}{\z@}
     \ifUnderFrame 
        \nobreak\par\nobreak#5\QCBOptA
     \fi
 \end{center}%
 }%
\def\FFRAME#1#2#3#4#5#6#7{%
 \begin{figure}[#1]%
  \let\QCTOptA\empty
  \let\QCTOptB\empty
  \let\QCBOptA\empty
  \let\QCBOptB\empty
  \ifOverFrame
    #4
    \ifx\QCTOptA\empty
    \else
      \ifx\QCTOptB\empty
        \caption{\QCTOptA}%
      \else
        \caption[\QCTOptB]{\QCTOptA}%
      \fi
    \fi
    \ifUnderFrame\else
      \label{#5}%
    \fi
  \else
    \UnderFrametrue%
  \fi
  \begin{center}\GRAPHIC{#7}{#6}{#2}{#3}{\z@}\end{center}%
  \ifUnderFrame
    #4
    \ifx\QCBOptA\empty
      \caption{}%
    \else
      \ifx\QCBOptB\empty
        \caption{\QCBOptA}%
      \else
        \caption[\QCBOptB]{\QCBOptA}%
      \fi
    \fi
    \label{#5}%
  \fi
  \end{figure}%
 }%
\def\makeactives{
  \catcode`\"=\active
  \catcode`\;=\active
  \catcode`\:=\active
  \catcode`\'=\active
  \catcode`\~=\active
}
   \gdef\activesoff{%
      \def"{\string"}
      \def;{\string;}
      \def:{\string:}
      \def'{\string'}
    }
\def\FRAME#1#2#3#4#5#6#7#8{%
 \bgroup
 \ifnum\draft=\@ne
   \wasdrafttrue
 \else
   \wasdraftfalse%
 \fi
 \def\LaTeXparams{}%
 \dispkind=\z@
 \def\LaTeXparams{}%
 \doFRAMEparams{#1}%
 \ifnum\dispkind=\z@\IFRAME{#2}{#3}{#4}{#7}{#8}{#5}\else
  \ifnum\dispkind=\@ne\DFRAME{#2}{#3}{#7}{#8}{#5}\else
   \ifnum\dispkind=\tw@
    \edef\@tempa{\noexpand\FFRAME{\LaTeXparams}}%
    \@tempa{#2}{#3}{#5}{#6}{#7}{#8}%
    \fi
   \fi
  \fi
  \ifwasdraft\draft=1\else\draft=0\fi{}%
  \egroup
 }%
\def\TEXUX#1{"texux"}
\long\def\QQQ#1#2{%
     \long\expandafter\def\csname#1\endcsname{#2}}%
\long\def\QQA#1#2{}%
\newcommand{\QTR}[2]{\csname text#1\endcsname{#2}}
\def\EXPAND#1[#2]#3{}%
\def\NOEXPAND#1[#2]#3{}%
\def\LaTeXparent#1{}%
\def\ChildStyles#1{}%
\def\ChildDefaults#1{}%
\def\QTagDef#1#2#3{}%
  \providecommand{\UNICODE}[2][]{}
\def\QQfnmark#1{\footnotemark}
 \def\abstract{%
  \if@twocolumn
   \section*{Abstract (Not appropriate in this style!)}%
   \else \small 
   \begin{center}{\bf Abstract\vspace{-.5em}\vspace{\z@}}\end{center}%
   \quotation 
   \fi
  }%
   \def\registered{\relax\ifmmode{}\r@gistered
                    \else$\m@th\r@gistered$\fi}%
 \def\r@gistered{^{\ooalign
  {\hfil\raise.07ex\hbox{$\scriptstyle\rm\text{R}$}\hfil\crcr
  \mathhexbox20D}}}}{}%
\newdimen\theight
\def\Column{%
 \vadjust{\setbox\z@=\hbox{\scriptsize\quad\quad tcol}%
  \theight=\ht\z@\advance\theight by \dp\z@\advance\theight by \lineskip
  \kern -\theight \vbox to \theight{%
   \rightline{\rlap{\box\z@}}%
   \vss
   }%
  }%
 }%
\def\qed{%
 \ifhmode\unskip\nobreak\fi\ifmmode\ifinner\else\hskip5\p@\fi\fi
 \hbox{\hskip5\p@\vrule width4\p@ height6\p@ depth1.5\p@\hskip\p@}%
 }%
\def\miss{\hbox{\vrule height2\p@ width 2\p@ depth\z@}}%
\def\tcol#1{{\baselineskip=6\p@ \vcenter{#1}} \Column}  %
\def\newfmtname{LaTeX2e}
  \DeclareOldFontCommand{\rm}{\normalfont\rmfamily}{\mathrm}
  \DeclareOldFontCommand{\sf}{\normalfont\sffamily}{\mathsf}
  \DeclareOldFontCommand{\tt}{\normalfont\ttfamily}{\mathtt}
  \DeclareOldFontCommand{\bf}{\normalfont\bfseries}{\mathbf}
  \DeclareOldFontCommand{\it}{\normalfont\itshape}{\mathit}
  \DeclareOldFontCommand{\sl}{\normalfont\slshape}{\@nomath\sl}
  \DeclareOldFontCommand{\sc}{\normalfont\scshape}{\@nomath\sc}
  \newcounter{equationnumber}  
  \def\mathletters{%
     \addtocounter{equation}{1}
     \edef\@currentlabel{\theequation}%
     \setcounter{equationnumber}{\c@equation}
     \setcounter{equation}{0}%
     \edef\theequation{\@currentlabel\noexpand\alph{equation}}%
  }
    \def\BibTeX{{\rm B\kern-.05em{\sc i\kern-.025em b}\kern-.08em
                 T\kern-.1667em\lower.7ex\hbox{E}\kern-.125emX}}}{}%
\def\AmS{{\protect\usefont{OMS}{cmsy}{m}{n}%
                A\kern-.1667em\lower.5ex\hbox{M}\kern-.125emS}}}{}%
\def\@@eqncr{\let\@tempa\relax
    \ifcase\@eqcnt \def\@tempa{& & &}\or \def\@tempa{& &}%
      \else \def\@tempa{&}\fi
     \@tempa
     \if@eqnsw
        \iftag@
           \@taggnum
        \else
           \@eqnnum\stepcounter{equation}%
        \fi
     \fi
     \global\tag@false
     \global\@eqnswtrue
     \global\@eqcnt\z@\cr}
\def\TCItag{\@ifnextchar*{\@TCItagstar}{\@TCItag}}
\def\@TCItag#1{%
    \global\tag@true
    \global\def\@taggnum{(#1)}}
\def\@TCItagstar*#1{%
    \global\tag@true
    \global\def\@taggnum{#1}}
\title{A class of null space conditions for sparse recovery via nonconvex, non-separable minimizations}
\author{
Hoang~Tran\thanks{Department of Computational and Applied Mathematics, Oak Ridge National Laboratory, 1 Bethel Valley Road, P.O. Box 2008, Oak Ridge TN 37831-6164. email: \texttt{tranha@ornl.gov}. }
\and Clayton~Webster\thanks{Department of Mathematics, University of Tennessee, 1403 Circle Drive, Knoxville, TN 37916 and Department of Computational and Applied Mathematics, Oak Ridge National Laboratory, 1 Bethel Valley Road, P.O. Box 2008, Oak Ridge TN 37831-6164. email: \texttt{webstercg@ornl.gov}.}\
}
\begin{document}
\maketitle

\begin{abstract}
For the problem of sparse recovery, it is widely accepted that nonconvex minimizations are better than $\ell_1$ penalty in enhancing the sparsity of solution. However, to date, the theory verifying that nonconvex penalties outperform (or are at least as good as) $\ell_1$ minimization in exact, uniform recovery has mostly been limited to separable cases. In this paper, we establish general recovery guarantees through null space conditions for {nonconvex, non-separable} regularizations, which are slightly less demanding than the standard null space property for $\ell_1$ minimization.  
\end{abstract}

\begin{keywords}
nonconvex optimization, null space property, sparse recovery, majorization theory
\end{keywords}

\begin{AMS}
94A12, 94A15, 90C26
\end{AMS}

\section{Introduction}
\label{sec:intro}


This paper is concerned with the reconstruction of a sparse signal $\bx \in \mathbb{R}^N$ from relatively few observed data $\by \in \mathbb{R}^m$. More precisely, we recover the unknown vector $\bx \in \mathbb{R}^N$ from the system 
\begin{align}
\label{problem:und_system}
\bA \bz  = \by, 
\end{align}
given the matrix $\bA \in \mathbb{R}^{m\times N}$ $(m\ll N)$, and using linear measurements $\by = \bA \bx$. In general, the system \eqref{problem:und_system} is underdetermined and has infinitely many solutions. With the additional acknowledgement that the unknown signal is sparse, which is the case in several contexts such as compressed sensing \cite{CRT06,Donoho06}, statistics \cite{HastieTibshiraniWainwright15}, and uncertainty quantification \cite{DO11,RS14,DexterTranWebster17}, we search for the sparsest solution of \eqref{problem:und_system} only. It is natural to reconstruct $\bx$ via the $\ell_0$ minimization problem
\begin{align}
\label{problem:l0}
\min_{\bz \in \mathbb{R}^N} \|\bz\|_{0}, \text{ subject to } \bA \bz = \by,
\end{align}
where $\|\bz\|_0$ is the number of nonzero components in $\bz$.
However, since the locations of the nonzero components are not available, solving \eqref{problem:l0} directly requires a combinatorial search and is unrealistic in general. 

An alternative and popular approach is basis pursuit or $\ell_1$ minimization, which consists in finding the minimizer of the problem
\begin{align}
\label{problem:l1}
\min_{\bz \in \mathbb{R}^N} \|\bz\|_{1}, \text{ subject to } \bA \bz = \by.
\end{align}
The convex optimization problem \eqref{problem:l1} is an efficient relaxation for \eqref{problem:l0} and often produces sparse solutions. The sparse recovery property of $\ell_1$ minimization has been well-developed. It is known from the compressed sensing literature that if $\bA$ possesses certain properties, such as the null space property and restricted isometry property, problem \eqref{problem:l0} and its convex relaxation \eqref{problem:l1} are equivalent \cite{FouRau13}.

Although the $\ell_1$ minimization technique has been used in a wide variety of problems, it is not able to reconstruct the sparsest solutions in many applications. As such, several nonconvex regularizations have been applied to improve the recovery performance. These penalties are generally closer to the $\ell_0$ penalty than the $\ell_1$ norm, thus it is reasonable to expect that nonconvex minimizations can further enhance the sparsity of the solutions. The most commonly used nonconvex penalty is probably $\ell_p$ with $0 < p < 1$ \cite{Chartrand07,FoucartLai09}, which interpolates between $\ell_0$ and $\ell_1$. Other well-known nonconvex methods in the literature include Smoothly Clipped Absolute Deviation (SCAD) \cite{FanLi01}, capped $\ell_1$ \cite{NIPS2008_3526,ShenPanZhu12} and transformed $\ell_1$ \cite{LvFan09,ZhangXin16}. All of these penalties are separable, in the sense that they are the sum of the penalty functions applied to each individual component of the vector. Recently, many non-separable regularizations are also considered, for instance, iterative support detection \cite{WangYin10}, 
$\ell_1 - \ell_2$ \cite{EsserLouXin13, YinLouHeXin15,YanShinXiu16}, two-level $\ell_1$ \cite{HuangLiuShi-et-al15} and sorted $\ell_1$ \cite{HuangShiYan15}. While nonconvex penalties are generally more challenging to minimize, they have been shown to reconstruct the sparse target signals from significantly fewer measurements in many computational test problems. 

Yet, to date, the theoretical evidence that nonconvex penalties are superior to (or at least, not worse than) $\ell_1$ optimization in uniform reconstruction of sparse signals has not been fully developed. For some regularizations such as $\ell_p$ ($0<p<1$) \cite{FoucartLai09}, iterative support detection \cite{WangYin10}, recovery guarantees are available, established via variants of restricted isometry property (RIP), and proved to be less restrictive than that of $\ell_1$. However, such guarantees remain elusive or suboptimal for many others, especially non-separable penalties. Let us elucidate this point by an examination of the null space properties recently developed for $\ell_1 - \ell_2$ \cite{YinLouHeXin15,YanShinXiu16} and two-level $\ell_1$ \cite{HuangLiuShi-et-al15}. 
For any $\bm{v} = (v_1,\ldots,v_N)\in \mathbb{R}^N$ and $S\subset \{1,\ldots,N\}$, denote by  $\oS$ the complement of $S$ in $\{1,\ldots,N\}$ and $\#(S)$ the cardinality of $S$, then the null space property of the measurement matrix $\bA$, introduced in \cite{CohenDahmenDeVore09}, can be stated as follows. 
\begin{definition}[Null space property]
For the matrix $\bA \in \mathbb{R}^{m\times N}$, the null space property is given by:
\begin{gather}
\label{NSP:l1}
\tag{\fontfamily{cmss}\selectfont NSP}
\begin{aligned}
\ker(\bA)\!\setminus\!\{\mathbf{0}\}\subset \bigg\{ \bv\! \in \! \mathbb{R}^N: 
\|\bv_S\|_1  < \|\bv_{\overline{S}}\|_1,\, \forall S \!\subset \!\{1,\ldots,N\}\text{ with }\#(S)\le s\bigg\}.
\end{aligned}
\end{gather}
\end{definition}%
It is well-known that \eqref{NSP:l1} is the necessary and sufficient condition for the successful reconstruction using $\ell_1$ minimization \cite{CohenDahmenDeVore09,FouRau13}, thus, stronger recovery properties, which are desirable and expected for nonconvex minimizations, would essentially allow $\ker(\bA)$ to be contained in a larger set than that in \eqref{NSP:l1}. The arguments in \cite{YinLouHeXin15,YanShinXiu16,HuangLiuShi-et-al15} are, however, at odds with this observation. Therein, the RIP were developed so that
\begin{align*}
&  \ker(\bm{A})\! \setminus\! \{\mathbf{0}\} \subset   \bigg\{ \bm{v} \in \mathbb{R}^N\!:\!
\|\bm{v}_S\|_1  < \|\bm{v}_{H\cap {\oS}}\|_1,\, \forall S\!: \#(S)\le s,\, \forall H\!:\#(H) = \lfloor N/2  \rfloor\! \bigg\},
\\
& \text{and} \ \ker(\bA)\setminus\{\mathbf{0}\}\subset  \bigg\{ \bm{v} \in \mathbb{R}^N\!:\!
\|\bm{v}_S\|_1 + \|\bm{v}_S\|_2 + \|\bm{v}_{\oS}\|_2  < \|\bm{v}_{{\oS}}\|_1,\, \forall S\!: \#(S)\le s\! \bigg\}, 
\end{align*}
respectively for two-level $\ell_1$ and $\ell_1 - \ell_2$ penalties. As then $\ker({\bA})$ was restricted to strictly smaller sets than as in $\ell_1$ case, the acquired RIPs were inevitably more demanding.   

In this paper, we establish new uniform recovery guarantees for a general class of nonconvex, possibly non-separable minimizations, which are superior to or at least as good as $\ell_1$. More specifically, we consider the nonconvex optimization problem in general form 
\begin{align}
\label{problem:PR}
\tag{${\text{\fontfamily{cmss}\selectfont P}}_{\text{\fontfamily{cmss}\selectfont R}}$}
\underset{\bm{z}\in \mathbb{R}^N}{\text{minimize}}  \ \ R(\bm{z}) \text{ subject to }\bm{A}\bm{z} = \bm{A}\bm{x}, 
\end{align}
and, under some mild assumptions of $R$ (applicable to most nonconvex penalties considered in the literature), derive null space conditions for the exact reconstruction via \eqref{problem:PR}, which are less demanding or identical to the standard conditions required by the $\ell_1$ norm. 
Our main achievement in this paper is: for the regularizations that are concave, non-separable and symmetric\footnote{The precise definitions of \textit{separable}, \textit{concave} and \textit{symmetric} are presented in Section \ref{sec:background}.} (such as $\ell_1 - \ell_2$, two-level $\ell_1$, sorted $\ell_1$), \eqref{NSP:l1} is sufficient for the exact recovery of all $s$-sparse signals. Furthermore, in many cases, an improved variant of \eqref{NSP:l1} is enough, i.e., 
\begin{gather}
\label{iNSP:nonsep}
\tag{\fontfamily{cmss}\selectfont iNSP}
\begin{aligned}
\ker(\bA)\setminus\{\mathbf{0}\}\subset \bigg\{ \bv \in \mathbb{R}^N: 
\|\bv_S\|_1  \le \|\bv_{\overline{S}}\|_1,\, \forall S\mbox{ with }\#(S)\le s\bigg\}.
\end{aligned}
\end{gather}
One distinct aspect of these results, as we shall see, is that they do \textit{not} have fixed support version. As such, our analysis requires technical arguments beyond the standard approach, that is, first deriving and then combining all null space properties for the recovery of vectors supported on fixed sets of same cardinality. 

To compare with the better known case where separable property is assumed, we revisit the necessary and sufficient condition for the exact recovery for concave and separable regularizations. The generalized null space property 
\begin{gather}
\label{iNSP:sep}
\tag{\fontfamily{cmss}\selectfont gNSP}
\begin{aligned}
\ker(\bA)\setminus\{\mathbf{0}\}\subset \bigg\{ \bv \in \mathbb{R}^N: 
R(\bv_S)  < R(\bv_{\overline{S}}),\, \forall S\mbox{ with }\#(S)\le s\bigg\}
\end{aligned}
\end{gather}
can be established from fixed support null space conditions, therefore is a routine extension of several similar results for specific penalties \cite{FoucartLai09, FouRau13}. This property can also be found in \cite{GribonvalNielsen07}. However, \eqref{iNSP:sep} is not automatically less restrictive than the standard \eqref{NSP:l1}, and we verify that this is the case only if the regularization is also symmetric. Penalties that can be treated in this setting include $\ell_p$, SCAD, transformed $\ell_1$, capped $\ell_1$.



\subsection{Related works}
This paper examines the recovery of sparse signals by virtue of \textit{global minimizers} of nonconvex problems. These are the best solutions one can acquire via the considered nonconvex regularizations, regardless of the numerical procedures used to realize them.  Therefore, our  
results serve as a benchmark for the performance of concrete algorithms. Often in practice, one can only obtain the \textit{local minimizers} of nonconvex problems. The theoretical recovery properties via local minimizers, attached to specific numerical schemes, has also gained considerable attention in the literature. In \cite{NIPS2008_3526, ZhangT10}, a multi-stage convex relaxation scheme was developed for solving problems with nonconvex objective functions, with a focus on capped-$\ell_1$. Theoretical error bound was established for fixed designs showing that the local minimum solution obtained by this procedure is superior to the global solution of the standard $\ell_1$. In \cite{YinXin16}, $\ell_{1} -\ell_{2}$ minimization was proved not worse than $\ell_1$, based on a difference of convex function algorithm (DCA), which is an iterative procedure and returns $\ell_1$ solution in the first step. 

Generalized conditions for nonconvex penalties were also established in \cite{FanLi01,LvFan09}, under which three desirable properties of the regularizations - unbiasedness, sparsity, and continuity - are fulfilled. Therein, the properties of the local minimizers and the sufficient conditions for a vector to be the local minimizer were analyzed with a unified approach, and specified for SCAD and transformed $\ell_1$ (referred to as SICA in \cite{LvFan09}). We remark that this framework applies to separable penalties only.  

Finally, sorted $\ell_1$ is a nonconvex method recently introduced in \cite{HuangShiYan15}. This method generalizes several nonconvex approaches, including iterative hard thresholding, two-level $\ell_1$, truncated $\ell_1, $ and small magnitude penalized (SMAP). 

\subsection{Organization}
The remainder of this paper is organized as follows. In Section \ref{sec:background}, we describe the general nonconvex, non-separable minimization problem, some examples, and provide the necessary background results. In Section \ref{sec:nonseparable}, we prove our main results on the null space properties for the exact reconstruction via non-separable penalties. The recovery conditions for separable  
penalties are discussed in Section \ref{sec:separable}. Finally, concluding remarks are given in Section \ref{sec:conclusion}.

\section{Nonconvex, non-separable minimization problem}
\label{sec:background}
Throughout this paper, we denote $\mathcal{U}= [0,\infty)^N$, and for $\bm{z} = (z_1,\ldots,z_N)\in \mathbb{R}^N$, let $|\bz| =  (|z_1|,\ldots,|z_N|)$ with 
$
|z|_{[1]} \ge \ldots \ge |z|_{[N]}
$
the components of $|\bz|$ in decreasing order. If $\bz$ has nonnegative components, i.e., $\bz \in \cU$, we simply write 
$$
z_{[1]} \ge \ldots \ge z_{[N]}. 
$$ 
We call a vector $\bz \in \mathbb{R}^N$: {equal-height} if all nonzero coordinates of $\bz$ have the same magnitude; and  $s$-sparse if it has at most $s$ nonzero coefficients. Also, $\be_j$ is the standard basis vector with a $1$ in the 
$j$-th coordinate and $0$'s elsewhere. The $j$-th coordinate of a vector $\bz\in \mathbb{R}^N$ is often denoted simply by $z_j$, but at some places, we also use the notation $(\bz)_j$.

Recall the nonconvex optimization problem of interest is given by 
\begin{align}
\label{problem:PR_copy}
\tag{${\text{\fontfamily{cmss}\selectfont P}}_{\text{\fontfamily{cmss}\selectfont R}}$}
\underset{\bm{z}\in \mathbb{R}^N}{\text{minimize}}  \ \ R(\bm{z}) \text{ subject to }\bm{A}\bm{z} = \bm{A}\bm{x}. 
\end{align}
We define the following theoretical properties of the penalty $R$ described in \eqref{problem:PR_copy}.
\begin{definition}
\label{def:penalty_prop}
Let $R$ be a mapping from $\mathbb{R}^N$ to $[0,\infty)$ satisfying $R(z_1,\ldots,z_N)$ $= R(|z_1|,\ldots,|z_N|)$, for all  $\bm{z} = (z_1,\ldots,z_N)\in\mathbb{R}^N$.  
\vspace{.2cm}
\begin{itemize}
\item $R$ is called \textbf{separable} on $\mathbb{R}^N$ if there exist functions $r_j: \mathbb{R} \to [0,\infty),\, j\in \{1,\ldots, N\}$ such that for every $\bz \in \mathbb{R}^N$, $R$ can be represented as 
\begin{align}
\label{def:separable}
R(\bz) = \sum_{j=1}^N r_j(z_j).
\end{align}
If $R$ cannot be written in the form \eqref{def:separable}, we say $R$ is \textbf{non-separable} on $\mathbb{R}^N$. 
\item $R$ is called \textbf{symmetric} on $\mathbb{R}^N$ if for every $\bm{z} \in \mathbb{R}^N$ and every permutation $(\pi(1),\ldots,\pi(N))$ of $(1,\ldots,N)$:
\begin{align}
\label{def:symmetric}
R(z_{\pi(1)},\ldots,z_{\pi(N)}) = R(\bm{z}).
\end{align}
\item $R$ is called \textbf{concave} on $\mathcal{U}$ if for every $\bm{z},\bm{z}' \in \mathcal{U}$ and $0\le \lambda \le 1$: 
\begin{align}
\label{def:concave}
R(\lambda \bm{z} + (1-\lambda )\bm{z}') \ge \lambda R(\bm{z}) + (1-\lambda) R(\bm{z}').
\end{align}
\item $R$ is called \textbf{increasing} on $\mathcal{U}$ if for every $\bm{z},\bm{z}'\in \mathcal{U}$, $\bm{z}\ge \bm{z}'$ then 
\begin{align}
\label{def:increasing}
R(\bm{z}) \ge R(\bm{z}'). 
\end{align}
Here, $\bm{z}\ge \bm{z}'$ means $z_j \ge z'_j,\, \forall 1\le j\le N$.
\end{itemize}
\end{definition}
%
%
%
%
The theory for uniform recovery developed herein is applicable for all concave, non-separable and symmetric penalties. In particular, our present conditions unify and improve the existing conditions for $\ell_1 - \ell_2$ and two-level $\ell_1$. On the other hand, we provide, for the first time, the theoretical requirements for the uniform recovery of sorted $\ell_1$. 
\begin{example}[Concave, non-separable and symmetric penalties]
\label{example:ncv_func}

\begin{enumerate}
\itemsep10pt
\item $\ell_1 - \ell_2$: \qquad \qquad \qquad \quad \,  \ \ $R_{\ell_1 - \ell_2}(\bm{z}) = \|\bm{z}\|_1 -  \|\bm{z}\|_2$, {\cite{EsserLouXin13, YinLouHeXin15}}.
\item Two-level $\ell_1$: \qquad \qquad \quad \quad $R_{2\ell_1}(\bm{z}) = \rho \sum\limits_{j\in J(\bm{z})} |z_j| +\! \sum\limits_{j\in J(\bm{z})^c} |z_j|$, {\cite{HuangLiuShi-et-al15}}.
\\
Here, $0\le \rho < 1$ and $J({\bm{z}})$ is the set of largest components of $|z_j|$. 
\item Sorted $\ell_1$: \qquad \qquad \qquad \quad \ {$R_{s\ell_1}(\bm{z}) = \beta_1|{z}|_{[1]} +   \ldots + \beta_N|{z}|_{[N]}$,  \cite{HuangShiYan15}}.  

 \vspace{.1cm}
 Here, $0\le \beta_1\le \ldots \le \beta_N$ and $|{z}|_{[1]} \ge \ldots \ge |{z}|_{[N]}$ are the components of $|\bz|$ ranked in decreasing order. 
\end{enumerate}
\end{example}

We remark that our analysis does not cover non-concave or non-symmetric regularizations. However, unlike their concave and symmetric counterparts, \eqref{NSP:l1} may not be sufficient to guarantee the uniform and sparse reconstruction with these penalties in general. Therefore, non-concave or non-symmetric regularizations are not necessarily better than $\ell_1$ minimization in exact, uniform recovery, in the sense that for some sampling matrices, $\ell_1$ can successfully recover all $s$-sparse vectors, while a non-concave or non-symmetric penalty fails to do so. 
A well-known example of non-concave penalties which are less efficient than $\ell_1$ is $\ell_p$ with $p>1$. Another interesting example is $\ell_1/\ell_2$ \cite{EsserLouXin13,YinEsserXin14}, a non-concave \textit{and} non-convex regularization. Whether there exists a null space property less restrictive than \eqref{NSP:l1} for this penalty is an open question. 
Non-symmetric regularizations, on the other hand, do not always recover the sparsest vectors due to their preference for some components over others. An example of a non-symmetric penalties is given in Section \ref{sec:separable}.  
\subsection{Properties of penalty functions}
Next, we present a few necessary supporting results for the penalty functions of interest. These results are relatively well-known, so will be provided here without proofs. 
\begin{lemma}
\label{lem:incr_prop}
Let $R$ be a map from $\mathbb{R}^N$ to $[0,\infty)$. If $R$ is concave on $\cU$, then $R$ is increasing on $\cU$. 
\end{lemma}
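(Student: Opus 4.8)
The plan is to reduce the multivariate statement to a one-dimensional fact about concave nonnegative functions, applied along the ray joining $\bz'$ to $\bz$. Fix $\bz,\bz'\in\cU$ with $\bz\ge\bz'$, and set $\bm{w}=\bz-\bz'$, which has nonnegative entries. Define $g:[0,\infty)\to[0,\infty)$ by $g(t)=R(\bz'+t\bm{w})$. Since $\bz'\ge\mathbf{0}$ and $\bm{w}\ge\mathbf{0}$, every point $\bz'+t\bm{w}$ has nonnegative coordinates and hence lies in $\cU$ for all $t\ge 0$; moreover $g$ takes values in $[0,\infty)$ because $R$ does. The goal $R(\bz)\ge R(\bz')$ is exactly $g(1)\ge g(0)$, so it suffices to show that $g$ is non-decreasing on $[0,\infty)$.

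First I would check that $g$ is concave on $[0,\infty)$. This is the standard fact that restricting a concave function to a segment lying entirely in its domain preserves concavity: for $t_1,t_2\ge 0$ and $\lambda\in[0,1]$, the point $\bz'+(\lambda t_1+(1-\lambda)t_2)\bm{w}$ equals $\lambda(\bz'+t_1\bm{w})+(1-\lambda)(\bz'+t_2\bm{w})$, both arguments lie in $\cU$, and applying \eqref{def:concave} gives $g(\lambda t_1+(1-\lambda)t_2)\ge\lambda g(t_1)+(1-\lambda)g(t_2)$.

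The core step is the purely one-dimensional claim: a concave function $g:[0,\infty)\to[0,\infty)$ is non-decreasing. I would argue by contradiction. If $g$ were not non-decreasing, there would be $0\le a<b$ with $g(a)>g(b)$. For any $c>b$, writing $b$ as the convex combination $b=\tfrac{c-b}{c-a}\,a+\tfrac{b-a}{c-a}\,c$ and applying concavity yields $g(b)\ge\tfrac{c-b}{c-a}g(a)+\tfrac{b-a}{c-a}g(c)$, which rearranges to $g(c)\le\tfrac{c-a}{b-a}\bigl(g(b)-\tfrac{c-b}{c-a}g(a)\bigr)$. Because $g(b)-g(a)<0$, the right-hand side tends to $-\infty$ as $c\to\infty$, forcing $g(c)<0$ for $c$ large. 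This contradicts $g\ge 0$, so $g$ must be non-decreasing; in particular $g(0)\le g(1)$, i.e.\ $R(\bz')\le R(\bz)$.

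The main obstacle to anticipate is conceptual rather than computational: concave functions are in general \emph{not} monotone, so monotonicity cannot follow from concavity alone. The crucial extra ingredient is that $R$ is bounded below (it maps into $[0,\infty)$), which rules out the only way a concave function on a half-line can fail to be non-decreasing, namely eventually diverging to $-\infty$. Everything else --- the passage to $\cU$-valued arguments along the ray and the preservation of concavity under restriction --- is routine, and notably the absolute-value symmetry of $R$ from Definition \ref{def:penalty_prop} is not needed here.
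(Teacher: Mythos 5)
Your proof is correct and rests on the same core mechanism as the paper's: arguing by contradiction, a decrease of $R$ from $\bz'$ to $\bz$ combined with concavity forces $R$ to become negative at a point further out along the ray through $\bz'$ and $\bz$, contradicting $R\ge 0$. The only difference is packaging --- you reduce to a one-variable concave function $g$ and send the far point to infinity, whereas the paper exhibits a single explicit witness $\tilde{\bz}$ on that same ray for which $R(\tilde{\bz})<0$ follows in one line --- so the approaches are essentially identical.
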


Note that if $R:\mathbb{R}^N \to [0,\infty)$ satisfies $R(z_1,\ldots,z_N) = R(|z_1|,\ldots,|z_N|)$, $\forall \bm{z} = (z_1,\ldots,z_N)\in\mathbb{R}^N$ and is increasing on $\mathcal{U}$, then
\begin{align}
\label{inc_prop}
R(\bm{z}) \ge R(\bm{z}')\ \text{for all } \bz, \bz' \in \mathbb{R}^N \text{ with }  |z_j| \ge |z'_j|,\, \forall 1\le j\le N. 
\end{align}
$R$ is therefore increasing in  the whole space $\mathbb{R}^N$ in the sense of \eqref{inc_prop}. We will use both terms ``increasing on $\cU$'' and ``increasing on $\mathbb{R}^N$'' interchangeably in the sequel.

To establish the generalized conditions for successful sparse recovery in the non-separable case, we employ the concept of \textit{majorization}. This notion, defined below, makes precise and rigorous the idea that the components of a vector are ``more (or less) equal'' than those of another.

\begin{definition}[Majorization, \cite{MarshallOlkinArnold11}]
For $\bz,\bz' \in \cU$, $\bz$ is said to be majorized by $\bz'$, denoted by $\bz \prec \bz' $, if
\begin{align}
\label{def:majorization}
\begin{cases}
\sum\limits_{j=1}^n z_{[j]} \le \sum\limits_{j=1}^n z'_{[j]},\ \ \ n = 1,\ldots, N-1,
\\
\sum\limits_{j=1}^N z_{[j]} = \sum\limits_{j=1}^N z'_{[j]}. 
\end{cases}
\end{align}
Given condition \eqref{def:majorization}, we also say $\bz'$ majorizes $\bz$ and denote $\bz' \succ \bz $. 
\end{definition}

As a simple example of majorization, we have
\begin{gather*}
\begin{aligned}
&\left(\frac16,\frac16,\frac16,\frac16,\frac16,\frac16\right) \prec \left(\frac14,\frac14,\frac14,\frac14,0,0\right) \prec \left(\frac38,\frac14,\frac14,\frac18,0,0\right) 
\\
&\qquad \qquad \prec \left(\frac12,\frac12,0,0,0,0\right) \prec  \left(\frac23,\frac13,0,0,0,0\right) \prec \left(1,0,0,0,0,0\right). 
\end{aligned}
\end{gather*}
Loosely speaking, a sparse vector tends to majorize a dense one with the same $\ell_1$ norm. On the other hand, a sparse-promoting penalty function should have small values at sparse signals and larger values at dense signals. One may think that a penalty function which reverses the order of majorization would promote sparsity, in particular, outperform $\ell_1$. We will show in the next sections that this intuition is indeed correct, but first, let us clarify that all symmetric and concave penalty functions considered are order-reversing\footnote{Functions that reverse the order of majorization are often referred to as Schur-concave functions, see, e.g., \cite{MarshallOlkinArnold11}.}, see \cite[Chapter 3.C]{MarshallOlkinArnold11} and \cite[Remark II.3.7]{Bhatia97}.

\begin{lemma}
\label{lemma:majorize}
Let $R$ be a function from $\mathbb{R}^N$ to $[0,\infty)$ satisfying $R(\!z_1,\ldots,\!z_N\!)$ $ = R(|z_1|,\ldots,|z_N|),\ \forall \bm{z} = (z_1,\ldots,z_N)\in\mathbb{R}^N$. If $R$ is symmetric on $\mathbb{R}^N$ and concave on $\cU$, then $R$ reverses the order of majorization: 
\begin{align}
\label{eq:schur-ccv}
R(\bz) \ge R(\bz') \text{ for all }\bz,\bz' \in \cU\text{ with }\bz \prec \bz'.
\end{align}
\end{lemma}
\section{Exact recovery of sparse signals via non-separable penalties}
\label{sec:nonseparable}
In this section, we prove that concave, non-separable and symmetric regularizations are superior to $\ell_1$ in sparse, uniform recovery. This setting applies to penalties such as two-level $\ell_1$, sorted $\ell_1$, and $\ell_1 - \ell_2$. Our main result is given below.
\begin{theorem}
\label{main_theorem}
Let $N>1$, $s\in \mathbb{N}$ with $1 \le  s <  N/2 $, and $\bA$ be an $m\times N$ real matrix. Consider the problem \eqref{problem:PR}, where $R$ is a function from $\mathbb{R}^N$ to $[0,\infty)$ satisfying $R(z_1,\ldots,z_N) = R(|z_1|,\ldots,|z_N|),\ \forall \bm{z} = (z_1,\ldots,z_N)\in\mathbb{R}^N$, symmetric on $\mathbb{R}^N$, and concave on $\mathcal{U}$.  
\begin{enumerate}[\ \ \ i)]
\item If 
\begin{align}
 \label{assump1}
 \tag{${\text{\fontfamily{cmss}\selectfont R}_{1}}$}
 R(z_1,\ldots,z_s, z_{s+1},{0,\ldots,0}) > R(z_1,\ldots, z_s,0,\ldots,0),\ \forall z_1, \ldots, z_{s+1} > 0,
 \end{align}
  then every $s$-sparse vector $\bm{x}\in \mathbb{R}^N$ is the unique solution to \eqref{problem:PR} provided that 
the null space property \eqref{NSP:l1} is satisfied. In this sense, {\eqref{problem:PR} is at least as good as $\ell_1$-minimization}.  
\item If 
\begin{gather}
  \label{assump2}
 \tag{${\text{\fontfamily{cmss}\selectfont R}_{2}}$}
 \begin{aligned}
 R(z_1,\ldots,z_{s-1},z_s, z_{s+1},  0,\ldots,  0) >\, R( z_1,& \ldots,z_{s-1},z_s + z_{s+1},{0,\ldots,0}),\ 
 \\
 & \forall z_1, \ldots, z_{s+1} > 0, \notag 
 \end{aligned}
 \end{gather}
 then every $s$-sparse vector $\bm{x}\in \mathbb{R}^N$ (except equal-height vectors) is the unique solution to \eqref{problem:PR} provided that the improved null space property \eqref{iNSP:nonsep} is satisfied. The recovery guarantee of \eqref{problem:PR} therefore is better than that of $\ell_1$-minimization. 
\end{enumerate}
\end{theorem}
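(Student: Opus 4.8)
The plan is to prove both statements through the single reduction that $\bx$ is the unique minimizer of \eqref{problem:PR} if and only if $R(\bx+\bv)>R(\bx)$ for every $\bv\in\ker(\bA)\setminus\{\mathbf 0\}$, since the feasible set is exactly $\bx+\ker(\bA)$. Fix such a $\bv$ and let $S$ be the support of $\bx$; padding with zero coordinates, I may assume $\#(S)=s$. Because $R(\bz)=R(|\bz|)$ and $R$ is symmetric, only the multisets of magnitudes of $|\bx+\bv|$ and $|\bx|$ matter, so I may sort both. The two facts about $R$ I intend to use are that $R$ is increasing on $\mathbb{R}^N$ (Lemma \ref{lem:incr_prop} and Remark \ref{remark:inc}) and that $R$ reverses majorization (Proposition \ref{lemma:majorize}); the structural hypotheses \eqref{assump1}, \eqref{assump2} will be invoked only to sharpen a non-strict inequality into a strict one.

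The analytic core is to show that $|\bx+\bv|$ dominates $|\bx|$ in the sense of weak supermajorization: for every $k$ the sum of the $k$ smallest magnitudes of $\bx+\bv$ is at least the sum of the $k$ smallest magnitudes of $\bx$; equivalently, writing $S_m(\cdot)$ for the sum of the $m$ largest magnitudes, $S_m(\bx+\bv)\le S_m(\bx)+\big(\|\bx+\bv\|_1-\|\bx\|_1\big)$ for all $m$. From \eqref{NSP:l1} (resp.\ \eqref{iNSP:nonsep}) applied to $S$ one first obtains $\|\bv_S\|_1<\|\bv_{\oS}\|_1$ (resp.\ $\le$), hence $\|\bx+\bv\|_1>\|\bx\|_1$ (resp.\ $\ge$). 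Granting the full family of partial-sum inequalities, the fact that $R$ is increasing and Schur-concave (Marshall--Olkin theory, \cite{MarshallOlkinArnold11}) yields $R(\bx+\bv)\ge R(\bx)$. Concretely I would realize this by constructing an intermediate $\bu\in\cU$ with $\bu\le|\bx+\bv|$ coordinatewise after sorting and $\bu\prec\bx$, obtained by lowering the largest entries of $|\bx+\bv|$ down to total mass $\|\bx\|_1$ (a water-level cut), so that $R(\bx+\bv)\ge R(\bu)\ge R(\bx)$ by monotonicity and then Proposition \ref{lemma:majorize}.

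I expect the partial-sum domination to be the main obstacle, precisely because $\bx+\bv$ is in general neither $\ge\bx$ coordinatewise (its largest entry can overshoot $\bx$'s after constructive addition on $S$) nor majorized by $\bx$ (it is spread across $\oS$), so monotonicity and spreading must be balanced simultaneously. To establish it I would fix a $k$-subset $J$, split $A=J\cap S$ and $B=J\cap\oS$, and bound $\sum_{j\in J}|(\bx+\bv)_j|\ge \sum_{A}(|x_j|-|v_j|)+\sum_{B}|v_j|$, using that any $\#(A)$ support entries of $\bx$ sum to at least its smallest $\#(A)$ entries; the residual $\sum_{B}|v_j|-\sum_{A}|v_j|$ is then controlled by applying \eqref{NSP:l1} to the sets $A$ and $B$. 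The delicate regime is large cancellation on $S$ (large $\sum_A|v_j|$), which is exactly where the null space property forces $\bv$, and hence $\bx+\bv$, to carry compensating mass off-support; I anticipate needing a transfer/rearrangement argument there rather than one global estimate.

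Finally I would convert $\ge$ into $>$ using the hypotheses on $R$. For part (i), \eqref{NSP:l1} forces $\bv_{\oS}\ne\mathbf 0$ (otherwise $\|\bv_S\|_1<\|\bv_{\oS}\|_1=0$ is impossible), so $\bx+\bv$ has at least $s+1$ nonzero coordinates; inserting one extra positive coordinate into the comparison chain and invoking \eqref{assump1} upgrades the inequality to $R(\bx+\bv)>R(\bx)$, giving that \eqref{problem:PR} is at least as good as $\ell_1$-minimization. For part (ii), \eqref{iNSP:nonsep} only delivers the non-strict domination, so the chain is a borderline equality candidate; here the exclusion of equal-height $\bx$ guarantees that the relevant majorization step is a genuine split of two distinct coordinate values at positions $s$ and $s+1$, and applying \eqref{assump2} at that step restores strictness for every non-equal-height $\bx$, which is the claimed improvement over $\ell_1$.
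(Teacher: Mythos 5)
Your skeleton is the same as the paper's (reduce to showing $R(\bx+\bv)>R(\bx)$ for $\bv\in\ker(\bA)\setminus\{\mathbf 0\}$, then exploit monotonicity of $R$ together with Schur-concavity via an intermediate vector), but the two places where all the work actually lives are left unproven, and neither is routine. The central claim you defer — that for every $m$ the sum of the $m$ largest magnitudes satisfies $S_m(\bx+\bv)\le S_m(\bx)+\|\bx+\bv\|_1-\|\bx\|_1$ — does not follow from the estimates you sketch. Splitting a $k$-set $J$ into $A=J\cap S$, $B=J\cap\oS$ and applying \eqref{NSP:l1} to $S\setminus A$ only yields $\|\bv_{S\setminus A}\|_1<\|\bv_{\overline{S\cup J}}\|_1+\|\bv_J\|_1$, which falls short of the needed $\|\bv_{S\setminus A}\|_1\le\|\bv_{\overline{S\cup J}}\|_1$ by exactly $\|\bv_J\|_1$, and this deficit is not negligible precisely in the regime you identify (large cancellation on $S$ forcing a large off-support entry of $\bv$ into the top-$m$ set). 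The claim is in fact true, but proving it requires applying \eqref{NSP:l1} to \emph{swapped} index sets of the form $(S\setminus I)\cup(J\cap\oS)$ with $I$ chosen as the support indices carrying the largest $|v_i|$, combined with the triangle inequality $|x_i+v_i|+|v_i|\ge x_i$; this is essentially the entire content of the theorem, and it is what the paper's explicit construction (flipping signs so that $\bv\le 0$ on $\supp(\bx)$, water-filling the off-support part to the level $\ob$, and the scaling by $\kappa$) is engineered to deliver. Stating that you "anticipate needing a transfer/rearrangement argument" is an acknowledgement that the proof is not there.

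The endgame is also incomplete in a way that matters. To invoke \eqref{assump1} you must compare $R(\bx+\bv)$ with $R(x_1,\ldots,x_s,a',0,\ldots,0)$ for some $a'>0$, i.e.\ cut $|\bx+\bv|$ down to mass $\|\bx\|_1+a'$ rather than $\|\bx\|_1$; but then the partial-sum bound only gives $\sum_{j\le n}u'_{[j]}\le S_n(\bx)+a'$ for $n\le s$, which is \emph{not} the majorization $\bu'\prec(x_1,\ldots,x_s,a',0,\ldots,0)$, since the excess $a'$ may sit inside the top-$n$ partial sums. The paper avoids this by never passing to sorted magnitudes alone: its chain $\bz_1\prec\bz_2\prec\bz_3\prec\bz_4\ge\bx'$ keeps the cancellation amounts $a_i=|v_i|$ on the support explicit and reassembles them coordinatewise ($|x_i-a_i|+a_i\ge x_i$), which is what lets the leftover mass $a'$ land in the $(s+1)$-st slot where \eqref{assump1} applies. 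Finally, for part (ii) your sketch does not address the borderline case $\|\bx+\bv\|_1=\|\bx\|_1$ permitted by \eqref{iNSP:nonsep} (e.g.\ $\bv$ an equal-height $2s$-sparse kernel vector), where the water-level cut is vacuous and strictness must come from \eqref{assump2} applied to a coordinate merge together with the hypothesis that $\bx$ is not equal-height; this is the paper's separate subcase $\#(\supp(\widehat{\bv}))=2s$ and it cannot be absorbed into a generic "restore strictness at the split" remark.
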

{

It is worth emphasizing that Theorem \ref{main_theorem} does not have a fixed support version. More specifically, it may be tempting to think that \eqref{NSP:l1} can be proved to be a sufficient condition for non-separable minimizations by the same mechanism as in $\ell_1$ and separable cases, i.e., combining all conditions for the recovery of vectors supported on fixed sets $S$ of cardinality $s$, i.e.,
\begin{gather}
\label{NSP-fs}
\begin{aligned}
\ker(\bA)\!\setminus\!\{\mathbf{0}\}\subset \bigg\{ \bv\! \in \! \mathbb{R}^N: 
\|\bv_S\|_1  < \|\bv_{\overline{S}}\|_1\bigg\}, 
\end{aligned}
\end{gather}
see \cite[Section 4.1]{FouRau13}. However, this strategy does not work, as we can show that unlike $\ell_1$, \eqref{NSP-fs} does not guarantee the successful recovery of vectors supported on $S$ with non-separable penalties.   
Indeed, consider the underdetermined system 
$
\bA\bz = \bA\bx,
$
where the matrix $\bA\in \mathbb{R}^{4\times 5}$ is defined as 
\[ \bA =  \left( \begin{array}{ccccc}
 1 & 0.5 & 1  & 0 & 0 \\
 1 & -0.5 & 0 & 1 & 0 \\
 0 & 0.1 & 0 & 0 & 1 \\
 1 & -1 & 0 & 0 & 0 
 \end{array} \right).
  \] 
As $\ker(\bA) = (-t,-t,3t/2,t/2,t/10)^{\top}$ satisfies \eqref{NSP-fs} with $S=\{1,2\}$, all sparse signals supported on $S$ can be exactly recovered with $\ell_1$ penalty. Consider $\ell_1 -  \ell_2$ regularization, let $\bx = (1,1,0,0,0)^\top$ supported on $S$, then any solution to $\bA\bz = \bA\bx$ can be represented as $(1-t,1-t,3t/2,t/2,t/10)^{\top}$. Among those, the unique minimizer of $R_{\ell_1 - \ell_2}(\bz)$ is $\bz\! =\! (0,0,3/2,1/2,1/10)^\top$, which is different from $\bx$ and not the sparsest solution. 
}

The proof of Theorem \ref{main_theorem} is rather lengthy and is relegated to Section \ref{sec:proof}. Let us first discuss the assumptions \eqref{assump1} and \eqref{assump2}. We will see from this proof that the concavity and symmetry of the penalty function $R$ is enough to guarantee every $s$-sparse vector is a solution to \eqref{problem:PR}. For the exact recovery, we also need these solutions to be unique. Such uniqueness could be derived assuming $R$ is strictly concave, but several regularizations do not satisfy this property. Rather, we only require strict concavity (or strictly increasing property) in one direction and locally at $s$-sparse vectors, reflected in \eqref{assump1} and \eqref{assump2}. These mild conditions can be validated easily for the considered non-separable penalties; see Proposition \ref{prop:non-sep}. We note that \eqref{assump1} is weaker than \eqref{assump2}. 

On the other hand, \eqref{iNSP:nonsep} cannot guarantee the exact recovery of equal-height, $s$-sparse vectors with symmetric penalties in general. Indeed, it is possible that $\ker(\bA)$  contains equal-height, $2s$-sparse vectors, for example, $\sum_{j=1}^{2s} \be_j$, in which case the recovery problem of an equal-height, $s$-sparse vector, say $\bz = \sum_{j=1}^{s} \overline{z} \be_j$, would essentially have at least another solution, namely $\bz' = - \sum_{j=s+ 1}^{2s}  \overline{z} \be_j$, as $R(\bz) = R(\bz')$. We therefore exclude the reconstruction of equal-height vectors under \eqref{iNSP:nonsep}.   

\begin{proposition}
\label{prop:non-sep}
\begin{enumerate}[\ \ \ i)]
\item The following methods:
\begin{center}
two-level $\ell_1$, sorted $\ell_1$ with $\beta_{s+1} > 0$
\end{center}
 are at least as good as $\ell_1$-minimization in recovering sparse vectors in the sense that these methods exactly reconstruct all $s$-sparse vectors under the null space property \eqref{NSP:l1}. 
 \item The following methods:
\begin{center}
$\ell_1 - \ell_2$, sorted $\ell_1$ with $\beta_{s+1} > \beta_{s}$
\end{center}
 are provably superior to $\ell_1$-minimization in recovering sparse vectors in the sense that these methods exactly reconstruct all $s$-sparse (except equal-height) vectors under the improved null space property \eqref{iNSP:nonsep}. 
 \end{enumerate}
\end{proposition}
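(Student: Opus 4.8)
The plan is to reduce everything to Theorem~\ref{main_theorem} by checking the appropriate local strict-monotonicity conditions, since the structural hypotheses are already in hand. By Proposition~\ref{prop:conv&symm}, each of $R_{2\ell_1}$, $R_{s\ell_1}$ and $R_{\ell_1-\ell_2}$ is symmetric on $\mathbb{R}^N$ and concave on $\cU$, so Theorem~\ref{main_theorem} applies verbatim once the relevant condition is verified. Concretely, part~i) will follow from Theorem~\ref{main_theorem}~i) after checking \eqref{assump1}, and part~ii) from Theorem~\ref{main_theorem}~ii) after checking \eqref{assump2}; the equal-height exclusion in part~ii) is then inherited directly from the theorem.

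For part~i) I would verify \eqref{assump1}. Fix $z_1,\ldots,z_{s+1}>0$. For sorted $\ell_1$, passing from $(z_1,\ldots,z_s,0,\ldots,0)$ to $(z_1,\ldots,z_s,z_{s+1},0,\ldots,0)$ inserts $z_{s+1}$ at some sorted rank $k\le s+1$ and shifts the weights of the smaller entries up by one index, giving
\[
R_{s\ell_1}(z_1,\ldots,z_{s+1},0,\ldots,0)-R_{s\ell_1}(z_1,\ldots,z_s,0,\ldots,0)=\beta_k z_{s+1}+\sum_{i=k}^{s}(\beta_{i+1}-\beta_i)z_{(i)},
\]
where $z_{(i)}$ is the $i$-th largest of $z_1,\ldots,z_s$. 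Since $\sum_{i=k}^{s}(\beta_{i+1}-\beta_i)=\beta_{s+1}-\beta_k$ and the $\beta_j$ are nonnegative and nondecreasing, the hypothesis $\beta_{s+1}>0$ makes this increment strictly positive (either $\beta_k>0$, or else $\beta_k=0$ and some $\beta_{i+1}-\beta_i>0$), which is precisely \eqref{assump1}. Two-level $\ell_1$ is the special case of sorted $\ell_1$ with weights $\rho$ on the top block and $1$ thereafter, so the same computation delivers \eqref{assump1}.

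For part~ii) I would verify \eqref{assump2}. For $\ell_1-\ell_2$ the two vectors in \eqref{assump2} have equal $\ell_1$ norm, so the inequality reduces to a strict comparison of $\ell_2$ norms, which follows from
\[
\sum_{i=1}^{s+1}z_i^2=\sum_{i=1}^{s-1}z_i^2+z_s^2+z_{s+1}^2<\sum_{i=1}^{s-1}z_i^2+(z_s+z_{s+1})^2,
\]
i.e.\ $2z_sz_{s+1}>0$; the larger $\ell_2$ norm on the right yields the smaller value of $R_{\ell_1-\ell_2}$ there, giving \eqref{assump2}. For sorted $\ell_1$ I would use the Abel-summation identity $R_{s\ell_1}(\bz)=\beta_N\|\bz\|_1-\sum_{j=1}^{N-1}(\beta_{j+1}-\beta_j)S_j(\bz)$, where $S_j(\bz)$ is the sum of the $j$ largest magnitudes of $\bz$. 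The two vectors in \eqref{assump2} share the same $\ell_1$ norm, so their difference equals $\sum_{j=1}^{N-1}(\beta_{j+1}-\beta_j)\big(S_j^{\mathrm{right}}-S_j^{\mathrm{left}}\big)$; since merging $z_s+z_{s+1}$ moves the vector strictly up in the majorization order, every $S_j^{\mathrm{right}}-S_j^{\mathrm{left}}\ge 0$, and at $j=s$ this difference equals the positive smallest nonzero entry on the left. As the $j=s$ term carries the coefficient $\beta_{s+1}-\beta_s>0$, the total is strictly positive, establishing \eqref{assump2}.

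The main obstacle is the strictness bookkeeping for sorted $\ell_1$: because $z_1,\ldots,z_{s-1}$ may interleave arbitrarily with $z_s,z_{s+1}$ (and with $z_s+z_{s+1}$) in the sorted order, a direct term-by-term comparison of weights is delicate. The partial-sum/majorization representation is what circumvents this, reducing the verification to the nonnegative increments $S_j^{\mathrm{right}}-S_j^{\mathrm{left}}$ and isolating the single index $j=s$, where the strict gap $\beta_{s+1}>\beta_s$ (respectively $\beta_{s+1}>0$ for \eqref{assump1}) is exactly what is needed; everything else is routine.
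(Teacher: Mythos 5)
Your proof is correct and follows the same overall skeleton as the paper's: invoke Proposition \ref{prop:conv&symm} for symmetry and concavity, then reduce to Theorem \ref{main_theorem} by checking \eqref{assump1} and \eqref{assump2}. The verifications themselves, however, are carried out differently, and in two places your route is cleaner. For $\ell_1-\ell_2$ you observe that the two vectors in \eqref{assump2} have equal $\ell_1$ norm and reduce the claim to $2z_sz_{s+1}>0$; the paper instead multiplies by the conjugate $\sqrt{z_1^2+\cdots+z_{s+1}^2}+z_{s+1}$, and the inequality it actually displays compares against $(z_1,\ldots,z_s,0,\ldots,0)$ rather than $(z_1,\ldots,z_{s-1},z_s+z_{s+1},0,\ldots,0)$ — your version targets \eqref{assump2} more directly. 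For sorted $\ell_1$ the paper locates the ranks of $z_s$ and $z_{s+1}$ in the sorted order and does a term-by-term weight comparison (with the rearrangement inequality); your Abel-summation identity $R_{s\ell_1}(\bz)=\beta_N\|\bz\|_1-\sum_{j=1}^{N-1}(\beta_{j+1}-\beta_j)S_j(\bz)$ replaces that bookkeeping by the monotonicity of the partial sums $S_j$ under majorization, isolating the single strict term at $j=s$; this is a genuinely different and arguably more transparent mechanism. One shared caveat: for two-level $\ell_1$ both you and the paper implicitly need the weight attached to the $(s+1)$-st largest entry to be positive (i.e., $\rho>0$ when the top block has size at least $s+1$), which is exactly the condition $\beta_{s+1}>0$ in your sorted-$\ell_1$ reduction; neither argument covers the degenerate case $\rho=0$ with a large top block, so you match the paper there.
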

\begin{proof} In this proof, for convenience, we often drop the zero components when denoting vectors in $\mathbb{R}^N$, for instance, $(z_1,\ldots,z_s,0,\ldots,0)$ with $z_i \ne 0, \forall\, 1\le i\le s$ and $s<N$ is simply represented as $(z_1,\ldots,z_s)$. Applying Theorem \ref{main_theorem}, we only need to show that: i) $R_{2\ell_1}$ and $R_{s\ell_1}$ with $\beta_{s+1} > 0$ satisfy \eqref{assump1} and ii) $R_{\ell_1 - \ell_2}$ and $R_{s\ell_1}$ with $\beta_{s+1} > \beta_{s}$ satisfy \eqref{assump2}. 
\begin{enumerate}
\item $R_{\ell_1 - \ell_2}$: For $z_1,\ldots, z_{s+ 1} > 0$, 
\begin{align*}
& \left(\sqrt{ z_1^2 + \ldots + z_{s+1}^2} + z_{s+1}\right)\sqrt{ z_1^2 + \ldots + z_{s}^2} > z_1^2 + \ldots + z_{s}^2
\\
& \qquad =  \left(\sqrt{ z_1^2 + \ldots + z_{s+1}^2} + z_{s+1}\right) \left(\sqrt{ z_1^2 + \ldots + z_{s+1}^2} - z_{s+1}\right), 
\end{align*}
thus, $z_{s+1} - \sqrt{ z_1^2 + \ldots + z_{s+1}^2}   > - \sqrt{ z_1^2 + \ldots + z_{s}^2} $. Adding $ z_1 + \ldots + z_{s}$ in both sides yields \eqref{assump2}. 

\item $R_{2\ell_1}$: We have
$$
R_{2\ell_1}(z_1,\ldots, z_{s+ 1}) \ge R_{2\ell_1}(z_1,\ldots, z_{s}) + \rho z_{s+1} > R_{2\ell_1}(z_1,\ldots, z_{s})
$$ 
for all $z_1,\ldots, z_{s+ 1} > 0$ and \eqref{assump1} is deduced. 

\item $R_{s\ell_1}$: Let us define $\bz = (z_1,\ldots, z_{s+1},0,\ldots,0)\in \mathbb{R}^N$ and assume $z_{s}$ and $z_{s+1}$ are the $T$-th and $t$-th largest components of $\bz$, i.e., $z_{s} = z_{[T]}$, $z_{s+1} = z_{[t]}$.   

\hspace{.2in}
Consider $\beta_{s+1} > 0$, we assume $t<s+1$ (the other case $t = s+1$ is trivial). For any $j$ with $t \le j \le s$, $  \beta_j z_{[j]} \ge \beta_{j} z_{[j+1]}$. At $j = s+1$, we estimate $\beta_{s+1} z_{[s+1]} > 0$. There follows 
\begin{align*}
& R_{s\ell_1}(z_1,\ldots, z_{s+1})  = \sum_{j=1}^{s+1} \beta_j z_{[j]} > \sum_{j=1}^{t-1} \beta_j z_{[j]} + \sum_{j=t}^s \beta_j  z_{[j+1]}  = R_{s\ell_1}(z_1,\ldots, z_{s}), 
\end{align*}
giving \eqref{assump1}.
 
\hspace{.2in}
Next, consider $\beta_{s+1} > \beta_{s}$. Without loss of generality, assume $t > T$ and also $t<s+1$ (the below argument also applies to $t = s+1$ with minor changes). At $j= t$, we estimate $\beta_t z_{[t]} \ge \beta_{T} z_{[t]}$. For all $j$ with $t+1 \le j\le s +1 $, $  \beta_j z_{[j]} \ge \beta_{j-1} z_{[j]}$. In particular, at $j = s+1$, the strict inequality holds, i.e., $  \beta_{s+1} z_{[s+1]} > \beta_{s} z_{[s+1]}$. Combining these facts and applying rearrangement inequality yield 
\begin{align*}
&  R_{s\ell_1}(z_1,\ldots, z_{s+1})   = \sum_{j=1}^{s+1} \beta_j z_{[j]}  > \sum_{\substack{ j=1 } }^{t-1} \beta_j z_{[j]} + \beta_{T}  z_{[t]} + \sum_{j = t+1}^{s+1}  \beta_{j-1} z_{[j]} 
  \\
&  = \sum_{\substack{ j=1 \\ j\ne T} }^{t-1} \beta_j z_{[j]} + \sum_{j = t+1}^{s+1}  \beta_{j-1} z_{[j]} + \beta_{T} ( z_{[T]} + z_{[t]})
\ge  R_{s\ell_1}(z_1,\ldots, z_{s-1}, z_s + z_{s+1})  . 
 \end{align*}
 We obtain \eqref{assump2} and complete the proof. 
\end{enumerate}
\end{proof}

\subsection{Proof of Theorem \ref{main_theorem}}
\label{sec:proof}
First, since $R$ is concave on $\mathcal{U}$, $R$ is also increasing on $\mathbb{R}^N$; see Lemma \ref{lem:incr_prop} and the discussion after. In what follows we denote 
\begin{align*}
\mcK_1 &:= \{\bv\in \mathbb{R}^N: &\ \|\bv_S\|_1 < \|\bv_{\oS}\|_1,\ \forall S\subset \{1,\ldots,N\}\, \text{ with }\, \#(S)\le s\}, 
\\
\mcK_2 &:= \{\bv\in \mathbb{R}^N \setminus \{\mathbf{0}\}: \!\!\!\!\!\!\!\!\!\!\!\!\!\!\!\!\!\! &\ \|\bv_S\|_1 \le \|\bv_{\oS}\|_1,\ \forall S\subset \{1,\ldots,N\} \, \text{ with }\, \#(S)\le s\}. 
\end{align*}
Recall that if $\bA $ satisfies \eqref{NSP:l1} or correspondingly \eqref{iNSP:nonsep}, then $\ker(\bA) \setminus \{\textbf{0}\}\subset \mcK_1$ or $\ker(\bA)\setminus\{\textbf{0}\} \subset \mcK_2$ respectively. Let $\bx$ be a fixed $s$-sparse vector in $\mathbb{R}^N$, then $\bz = \bx$ solves $\bA\bz = \bA \bx$ and any other solution of this system can be written as $\bz = \bx + \bv$ with some $\bv \in \ker(\bA)\setminus\{\textbf{0}\} \subset \mathcal{K}_{1|2}$. We will show in the next part that:
\begin{itemize}
\vspace{.03in}
\item if \eqref{assump1} holds then  
$$
R(\bx + {\bv}) > R(\bx),\ \forall \bv\in \mathcal{K}_1; \ \text{ and } 
$$
\item if \eqref{assump2} holds and $\bx$ is not an equal-height vector then  
$$
R(\bx + {\bv}) > R(\bx),\ \forall \bv\in \mathcal{K}_2,
$$
\vspace{.03in}
\end{itemize}
 thus $\bx$ is the unique solution to \eqref{problem:PR} assuming either: (i) \eqref{assump1} and \eqref{NSP:l1}; or (ii) \eqref{assump2} and \eqref{iNSP:nonsep} and $\bx$ is not an equal-height vector.  
   
Since $R$ is symmetric, without loss of generality, we assume 
\begin{align}
&\bx = (x_1,\ldots,x_s,0,\ldots,0) , \mbox{ where }x_i \ge 0,\, \forall 1\le i\le s.   \label{define_x}
\end{align}
For every $\bv \in \cK_{1|2}$, there exists $\tilde{\bv} \in \cK_{1|2}$  that 
$$
R(\bx + \tilde{\bv})  \le R(\bx + {\bv} ), 
$$
and the $s$ first components of $\tilde{\bv}$ are nonpositive, i.e., 
$
\tilde{v}_i \le 0,\, \forall 1\le i\le s. 
$
Indeed, let ${\bv} = (v_1,\ldots,v_N)$ be a vector in $\mathcal{K}_{1|2}$. For any $1\le i \le s$, if ${v}_i > 0$, we flip the sign of ${v}_i$, i.e., replacing ${v}_i $ by $-{v}_i $. Denoting the newly formed vector by $\tilde{\bv}$, then $\tilde{\bv}\in \mcK_{1|2}$ and 
$$
|(\bx + \tilde{\bv})_i | = |x_i -{v}_i | \le x_i + {v}_i = |(\bx + {\bv})_i|, 
$$
for every $ i $ where the sign is flipped, while $(\bx + \tilde{\bv})_i = (\bx + {\bv})_i$, for other $i$. By the increasing property of $R$, $\tilde{\bv}$ satisfies 
$
R(\bx + \tilde{\bv})  \le R(\bx + {\bv} ). 
$
The first $s$ components of $\tilde{\bv}$ are nonpositive by definition. 

It is therefore enough to consider $\bv\in \cK_{1|2}$ represented as
\begin{align}
\label{def:v}
{\bv} = (- {a}_1,\ldots,- {a}_s,{b}_1,\ldots,{b}_t,0,\ldots,0), 
\end{align}
where $\{{a}_i\},\, \{{b_j}\}$ are correspondingly nonnegative and positive sequences. We denote by $E({\bv})$ the multiset $ \{{a}_i:i\in \overline{1,s}\}\cup  \{{b}_j: j\in \overline{1,t}\}$, by $U({\bv})$ the multiset containing $s$ largest elements in $E(\bv)$ and $L(\bv) := E(\bv)\setminus U(\bv)$. Let 
\begin{align*}
&\overline{b}(\bv) := \min U(\bv), \mbox{ and}
\\
&\sigma(\bv)\mbox{ and }\lambda(\bv)\mbox{ be the sum of all elements in }U(\bv)\mbox{ and }L(\bv) \mbox{ respectively}.
\end{align*}
Then ${\bv} \in \mathcal{K}_1$ (or ${\bv} \in \mathcal{K}_2$ correspondingly) if and only if $\sigma(\bv)\! < \lambda(\bv)$ ($\sigma(\bv)\! \le \lambda(\bv)$ and $\bv \ne \mathbf{0}$ respectively). Also, note that $t\ge s+1 $ for all ${\bv} \in \mathcal{K}_1$, and $t \ge s $ for all ${\bv} \in \mathcal{K}_2$ with $t=s$ occurring only if $\bv$ is an equal-height vector.  Below we consider two cases.
\vspace{.03in}

\textbf{Case 1:} Assume \eqref{assump1} and $\bv\in \cK_1$ as in \eqref{def:v}. 

We will show there exists $\widehat{\bv}\in \mathcal{K}_1 $  that  
\begin{align*}
 \widehat{\bv} = & \, (- {a}_1,\ldots,- {a}_s,\underbrace{\overline{b},\ldots,\overline{b}}_{T-1},b_{T},0,\ldots,0),\ \text{for some }T\ge s+1, \ 0<  b_{T} \le \overline{b} \equiv \overline{b}(\bv), 
\\
 & \text{and }\ \   \qquad \quad R(\bx + \widehat{\bv})  \le R(\bx + {\bv} ). 
\end{align*}

First, we replace all components $b_j$ in $U(\bv)$ (which satisfy $b_j\ge \ob$) by $\ob$ and subtract a total amount $\sum_{b_j \in U(\bv)} ( b_j - \overline{b})$ from $b_{j}$'s in $L(\bv)$. It is possible to form an elementwise nonnegative vector (referred to as $\bv'$) with this step, since
\begin{align}
\label{mp:est1}
\sum_{b_j \in U{(\bv)}} ( b_j - \overline{b}) < \sum_{b_j \in L(\bv)}  b_j .  
\end{align}
Indeed, one has $\sum\limits_{a_i \in L(\bv)}  a_i \le s\ob \le  \sum\limits_{a_i \in U(\bv)}  a_i + \sum\limits_{b_j \in U(\bv)}  \ob$. Combining with $\sigma(\bv)\! < \lambda(\bv)$, it gives  
$$
 \sum\limits_{a_i \in U(\bv)}  a_i +  \sum\limits_{b_j \in U(\bv)}  b_j < \sum\limits_{a_i \in L(\bv)}  a_i +  \sum\limits_{b_j \in L(\bv)}  b_j \le  \sum\limits_{a_i \in U(\bv)}  a_i + \sum\limits_{b_j \in U(\bv)}  \ob +  \sum\limits_{b_j \in L(\bv)}  b_j, 
$$
yielding \eqref{mp:est1}. We remark that $\bv' \in \cK_1$, since  
$$
\sigma(\bv')\! = \sigma(\bv)  - \sum_{b_j \in U(\bv)} ( b_j - \overline{b}) < \lambda(\bv) - \sum_{b_j \in U(\bv)} ( b_j - \overline{b}) = \lambda(\bv'). 
$$
On the other hand, by the construction, the magnitudes of coordinates of $\bx + \bv'$ are less than or equal to that of $\bx + \bv$. The increasing property of $R$ gives $R(\bx + {\bv}')  \le R(\bx + {\bv} )$. 

Now, representing $\bv'$ as 
$$
\bv' =  (- {a}_1,\ldots,- {a}_s,{b}'_1,\ldots,{b}'_{t'},0,\ldots,0), 
$$
we observe $\sigma(\bv') \ge s\ob$, as $\bv'$ has at least $s$ components whose magnitudes are not less than $\ob$. Thus, $\sum_{j =1}^{t'} b'_{j} \ge \lambda(\bv') > \sigma(\bv') \ge s\ob$ and there exist $T \ge s+1 $ and $b_T \in (0,\ob]$ such that  
$(T-1)\ob + b_T= \sum_{j =1}^{t'} b'_{j}$. We define 
$$
 \widehat{\bv} =  \, (- {a}_1,\ldots,- {a}_s,\underbrace{\overline{b},\ldots,\overline{b}}_{T-1},b_{T},0,\ldots,0). 
$$
One has $U(\widehat{\bv}) = U(\bv')$, which implies $\sigma(\widehat{\bv}) = \sigma(\bv')$ and $\lambda(\widehat{\bv}) = \lambda(\bv')$. Then, $\widehat{\bv} \in \cK_1$ can be deduced from the fact that $\bv' \in \mathcal{K}_1$. As $b'_{j} \le \ob\ \, \forall 1\le j\le t' $, it is easy to see {$|\bx + \bv'| \prec  |\bx + \widehat{\bv} |$}. From Lemma \ref{lemma:majorize}, there follows $ R( \bx + \widehat{\bv}) \le R(\bx + \bv')$.   

We proceed to prove $R(\bx) < R(\bx + \widehat{\bv})$. 
Let $\oa = \sum_{i=1}^s a_i / s$. If $\oa = 0$, the assertion can be deduced easily from the increasing property of $R$ and \eqref{assump1}. Let us consider $\oa >0$. Since $\widehat{\bv} \in \mathcal{K}_1$, 
$
(T-1)\ob + b_T > s\oa. 
$
There exists $0< \kappa < 1 $ such that 
$$
(T-1)\kappa \ob + \kappa b_T > s\oa > (T-1)\kappa \ob. 
$$
We write 
$
(T-1)\kappa \ob + \kappa b_T = s\oa + a' = a_1 + \ldots + a_s + a' , 
$
then $ 0 < a' < \kappa b_T$. Also, note that $\oa > \kappa \ob$, as $T - 1 \ge s$. Denoting
\begin{gather}
\label{maj_series}
\begin{aligned}
\bz_1 &=  (|x_1- {a}_1|,\ldots,| x_s - {a}_s|,\underbrace{\kappa\overline{b},\ldots,\kappa\overline{b}}_{T-1},\kappa b_{T},0,\ldots,0),
\\
\bz_2 &=  (|x_1- {a}_1|,\ldots,| x_s - {a}_s|,\underbrace{\oa,\ldots,\oa}_{s},a',0,\ldots,0),
\\
\bz_3 & =  (|x_1- {a}_1|,\ldots,| x_s - {a}_s|,{a_1,\ldots,a_s},a',0,\ldots,0),
\\
\bz_4 & =  (|x_1- {a}_1| + a_1,\ldots,| x_s - {a}_s|+a_s,a',0,\ldots,0),
\\
\bx' & = ({x_1,\ldots,x_s},a',0,\ldots,0), 
\end{aligned}
\end{gather}
there holds 
$
\bz_1 \prec \bz_2 \prec \bz_3 \prec \bz_4. 
$
Applying Lemma \ref{lemma:majorize} yields
\begin{align}
\label{mp:est2}
R(\bz_1) \ge R(\bz_2) \ge R(\bz_3) \ge R(\bz_4). 
\end{align}
On the other hand, $\bx + \widehat{\bv} \ge \bz_1$ and $\bz_4 \ge \bx'$, thus 
\begin{align}
\label{mp:est3}
R(\bx + \widehat{\bv}) \ge R(\bz_1)\ \text{ and }\ R(\bz_4) \ge R(\bx'). 
\end{align}
We have from \eqref{assump1} that
\begin{align}
\label{mp:est4}
R(\bx') > R(\bx).
\end{align}
Combining \eqref{mp:est2}--\eqref{mp:est4} gives $R(\bx + \widehat{\bv}) > R(\bx)$, as desired. 

\textbf{Case 2:} Assume \eqref{assump2} and $\bv\in \cK_2$ as in \eqref{def:v} and $\bx$ is not an equal-height vector. 
Following the arguments in Case 1, there exists $\widehat{\bv}\in \mathcal{K}_2 $  that  
\begin{align*}
 \widehat{\bv} = & \, (- {a}_1,\ldots,- {a}_s,\underbrace{\overline{b},\ldots,\overline{b}}_{T-1},b_{T},0,\ldots,0),\ \text{for some }T\ge s+1, \ 0 \le  b_{T} < \overline{b} \equiv \overline{b}(\bv), 
\\
 & \text{and }\ \   \qquad \quad R(\bx + \widehat{\bv})  \le R(\bx + {\bv} ).
\end{align*}
Note that here $b_T \in [0,\ob)$, implying $\#(\supp(\widehat{\bv})) \ge 2s$ (rather than $b_T \in (0,\ob]$ and $\#(\supp(\widehat{\bv})) \ge  2s+1$ as in previous case). 

First, if $\#(\supp(\widehat{\bv})) = 2s$, then $\widehat{\bv}$ must be an equal-height vector: 
\begin{gather*}
\begin{aligned}
\widehat{\bv} &= \, (\underbrace{- \ob,\ldots,- \ob}_s,\underbrace{\overline{b},\ldots,\overline{b}}_{s},0,\ldots,0),
\\
\bx + \widehat{\bv} &= \, ({x_1- \ob,\ldots,x_s- \ob},\underbrace{\overline{b},\ldots,\overline{b}}_{s},0,\ldots,0).
\end{aligned}
\end{gather*}
We denote 
\begin{gather*}
\begin{aligned}
\bz_5 &=  (|x_1- {\ob}| + \ob,\ldots,| x_s - {\ob}| + \ob ,0,\ldots,0).
\end{aligned}
\end{gather*}
Since $\bx$ is not an equal-height vector, $|x_{i} - \ob| \ne 0$ for some $1\le i \le s$. Lemma \ref{lemma:majorize} and assumption \eqref{assump2} give 
$
R(\bz_5) < R(\bx + \widehat{\bv}). 
$
It is easy to see $\bx \le \bz_5$, therefore, 
$
R(\bx) \le R(\bz_5), 
$
and we arrive at $R(\bx) < R(\bx + \widehat{\bv})$.

Otherwise, if $\#(\supp(\widehat{\bv})) \ge 2s +1$, then 
$
(T-1) \ob +  b_T > s \ob. 
$
Let us again denote $\oa = \sum_{i=1}^s a_i / s$ and consider $\oa >0$.  
Since $\widehat{\bv}\in \mathcal{K}_2 $,   
$
(T-1)\ob + b_T \ge s\oa, 
$
and we can find $0< \kappa \le 1 $  that 
$$
(T-1)\kappa \ob + \kappa b_T \ge s\oa > s \kappa \ob. 
$$
We write 
$
(T-1)\kappa \ob + \kappa b_T = s\oa + a' , 
$
then $ a' \ge 0$ and $\oa > \kappa \ob$. 
Denoting $\bz_1, \bz_2, \bz_3, \bz_4$ and $\bx'$ as in \eqref{maj_series}, similarly to Case 1, there holds 
$$
R(\bx) \le R(\bx') \le R(\bz_4) \le R(\bz_3) \le R(\bz_2) \le R(\bz_1) \le R(\bx + \widehat{\bv})
$$
due to 
$
\bz_1 \prec \bz_2 \prec \bz_3 \prec \bz_4, \, 
$
$
\bz_4 \ge \bx' \ge \bx
$
 and
 $
 \bz_1 \le \bx + \widehat{\bv}. 
 $
We show that the strict inequality must occur somewhere in the chain. If $a' > 0$, we have from \eqref{assump2} that 
 $
 R(\bx) < R(\bx'). 
 $
 Otherwise, if $a'=0$, then $\#(\supp(\bz_1)) \ge s + 1 $ and $\#(\supp(\bz_4)) \le s $. Applying Lemma \ref{lemma:majorize} and assumption \eqref{assump2} gives 
$
 R(\bz_4) < R(\bz_1).
$ 
This concludes the proof. $\square$

\section{Exact recovery of sparse signals via separable penalties}
\label{sec:separable}
With the addition of separable property, the path to establish the null space condition for nonconvex minimizations is much simpler. To highlight the difference between the separable and non-separable cases, in this section, we revisit the exact recovery of sparse signals assuming the penalty is concave and separable. The discussion herein is applicable for the following well-known regularizations. 
\begin{example}[Concave, separable and symmetric penalties]
\label{example:sep_func}
\begin{enumerate}
\itemsep10pt
\item $\ell_p$ norm with $0< p<1$: \ \  \ $R_{\ell_p}(\bm{z}) = \|\bm{z}\|_p^p$, \cite{Chartrand07,FoucartLai09}.
\item SCAD:\qquad \qquad \qquad \qquad \,\ \  	$R_{\text{\fontfamily{cmss}\selectfont SCAD}}(\bz) = \sum_{j=1}^N r_{\text{\fontfamily{cmss}\selectfont SCAD}} (z_j)$, \cite{FanLi01}.

 \vspace{.1cm}
Here, $r_{\text{\fontfamily{cmss}\selectfont SCAD}}(z_j) = 
\begin{cases}
a_1 |z_j|,\, &\text{ if } |z_j|< a_1,
\\
- \frac{a_1|z_j|^2 - 2 a_1 a_2 |z_j| + a_1^3}{2(a_2 - a_1)} 	 &\text{ if } a_1 \le |z_j| \le a_2, 
\\
\frac{a_1 a_2 + a_1^2}{2} &\text{ if } |z_j|> a_2. 
\end{cases}
$
\item Transformed $\ell_1$: \qquad \qquad \,  $R_{t\ell_1}(\bz) = \sum_{j=1}^N \rho_a(z_j)$, \cite{LvFan09,ZhangXin16}. 
 
  \vspace{.1cm}
Here, $\rho_a(z_j) = \frac{(a+1)|z_j|}{a + |z_j|},\, \forall z_j\in \mathbb{R}$ with $a\in (0,\infty)$. 
\item Capped $\ell_1$: \qquad \qquad \quad \quad \, \ $R_{c\ell_1}(\bm{z}) = \sum\limits_{j=1}^N \min\{|z_j|,\alpha\}$, {\cite{NIPS2008_3526}}.
\end{enumerate}
\end{example}

The first key difference is that with the separable property, one can obtain the subadditivity of penalty functions.
\begin{lemma}
\label{prop:triangle_ineq}
Let $R$ be a map from $\mathbb{R}^N$ to $[0,\infty)$ satisfying $R(z_1,\ldots,z_N) = R(|z_1|,\ldots,|z_N|),\ \forall \bm{z} = (z_1,\ldots,z_N)\in\mathbb{R}^N$. If $R$ is separable on $\mathbb{R}^N$ and concave on $\cU$, then $R$ is subadditive on $\mathbb{R}^N$: 
$$
R(\bz + \bz') \le R(\bz) + R(\bz'),\ \forall \bz,\bz' \in \mathbb{R}^N. 
$$
\end{lemma}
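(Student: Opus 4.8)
The plan is to use separability to reduce the $N$-dimensional claim to a one-dimensional statement about each summand $r_j$, and then to invoke the classical fact that a concave function on $[0,\infty)$ is subadditive relative to its value at the origin. Since $R(\bz)=\sum_{j=1}^N r_j(z_j)$, it suffices to produce, for each $j$ and all real $x,y$, a termwise estimate that sums correctly.

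First I would record the properties each univariate piece inherits. Fixing every coordinate except the $j$-th at $0$, the restriction $z_j\mapsto r_j(z_j)+\sum_{k\neq j}r_k(0)$ is the trace of $R$ on a segment of $\cU$. Hence concavity of $R$ on $\cU$ forces $r_j$ to be concave on $[0,\infty)$; the symmetry hypothesis $R(z_1,\ldots,z_N)=R(|z_1|,\ldots,|z_N|)$ forces $r_j$ to be even; and Lemma \ref{lem:incr_prop} (concave $\Rightarrow$ increasing on $\cU$) forces $r_j$ to be nondecreasing on $[0,\infty)$.

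Next, for each $j$ I set $h_j(t)=r_j(t)-r_j(0)$ for $t\ge 0$, a concave nondecreasing function with $h_j(0)=0$. Adding the two concavity estimates
$$ h_j(s)\ge \tfrac{s}{s+t}\,h_j(s+t)+\tfrac{t}{s+t}\,h_j(0),\qquad h_j(t)\ge \tfrac{t}{s+t}\,h_j(s+t)+\tfrac{s}{s+t}\,h_j(0) $$
gives subadditivity $h_j(s+t)\le h_j(s)+h_j(t)$ for $s,t\ge 0$. Combining this with the triangle inequality $|x+y|\le |x|+|y|$ and monotonicity of $h_j$ yields, for arbitrary real $x,y$,
$$ r_j(x+y)-r_j(0)=h_j(|x+y|)\le h_j(|x|+|y|)\le h_j(|x|)+h_j(|y|)=r_j(x)+r_j(y)-2r_j(0), $$
that is, $r_j(x+y)\le r_j(x)+r_j(y)-r_j(0)$. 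Summing over $j$ produces
$$ R(\bz+\bz')\le R(\bz)+R(\bz')-\sum_{j=1}^N r_j(0)=R(\bz)+R(\bz')-R(\mathbf{0}), $$
and since $R$ takes values in $[0,\infty)$ we have $R(\mathbf{0})\ge 0$, so discarding that term gives the asserted inequality.

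The point I expect to require the most care is that subadditivity is \emph{not} controlled termwise: an even concave $r_j$ need not be subadditive unless $r_j(0)\ge 0$, and the hypotheses do not grant $r_j(0)\ge 0$ for each individual $j$. The resolution is to carry the $-r_j(0)$ correction through every univariate estimate and absorb it only at the very end into the single global quantity $R(\mathbf{0})=\sum_j r_j(0)$, whose nonnegativity is guaranteed by $R\ge 0$. The only other mild subtlety is the sign reduction, namely that monotonicity of $h_j$ together with the triangle inequality reduces the mixed-sign case to the nonnegative case handled by concavity.
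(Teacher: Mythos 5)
Your proof is correct and follows essentially the same route as the paper's: reduce to each univariate $r_j$, derive its concavity and monotonicity on $[0,\infty)$ from those of $R$, obtain subadditivity up to the value at $0$ from the two chord inequalities, and handle general signs via evenness and the triangle inequality. The only difference is bookkeeping: the subtlety you flag is moot because Definition \ref{def:penalty_prop} requires each $r_j$ to map into $[0,\infty)$, so $r_j(0)\ge 0$ holds termwise and the paper discards it immediately, whereas you carry $-r_j(0)$ to the end and absorb it into $R(\mathbf{0})\ge 0$ --- a valid but unnecessary precaution.
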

\begin{proof}
The assertion can be implied from its univariate version, see, e.g., \cite[Problem II.5.12]{Bhatia97} and the separable property of $R$.
\end{proof}

In this case, it is possible and also natural to show \eqref{iNSP:sep} to be the \textit{necessary and sufficient} condition for the uniform, exact recovery via fixed support setting. The following theorem extends several results for specific penalties, e.g., $\ell_p$ \cite{FoucartLai09}, weighted $\ell_1$ \cite{RW15}. Similar result was proved in \cite{GribonvalNielsen07}. 

\begin{theorem}
\label{theorem_separable}
Let $\bA$ be an $m\times N$ real matrix. Consider the problem \eqref{problem:PR}, where $R$ is a function from $\mathbb{R}^N$ to $[0,\infty)$ satisfying $R(z_1,\ldots,z_N) = R(|z_1|,\ldots,|z_N|),\ \forall \bm{z} = (z_1,\ldots,z_N)\in\mathbb{R}^N$, separable on $\mathbb{R}^N$, concave on $\cU$ and $R( \mathbf{0}) = 0$. Then every $s$-sparse vector $\bm{x}\in \mathbb{R}^N$ is the unique solution to \eqref{problem:PR} if and only if 
the generalized null space property \eqref{iNSP:sep} is satisfied.
\end{theorem}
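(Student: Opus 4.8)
The plan is to prove the two implications separately, in each case reducing the comparison of penalty values to the behaviour of $R$ along a kernel direction $\bv = \bz - \bx$. Three ingredients from the preceding results carry the argument: the separable decomposition $R(\bz) = R(\bz_S) + R(\bz_{\overline{S}})$, the even symmetry $R(\bz) = R(|\bz|)$ (so that $R(-\bw) = R(\bw)$ for any $\bw$), and the subadditivity of $R$ supplied by Lemma \ref{prop:triangle_ineq}. The decomposition itself follows from separability together with $R(\mathbf{0}) = 0$: since each $r_j \ge 0$ and $\sum_j r_j(0) = R(\mathbf{0}) = 0$, every $r_j(0)$ must vanish, whence $R(\bz_S) = \sum_{j \in S} r_j(z_j)$ and therefore $R(\bz) = R(\bz_S) + R(\bz_{\overline{S}})$ for any index set $S$.

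For sufficiency I would assume \eqref{iNSP:sep} and let $\bx$ be $s$-sparse with $S := \mathrm{supp}(\bx)$, so $\#(S) \le s$. Any competitor $\bz \ne \bx$ feasible for \eqref{problem:PR} has the form $\bz = \bx + \bv$ with $\bv \in \ker(\bA)\setminus\{\mathbf{0}\}$, and since $\bx_{\overline{S}} = \mathbf{0}$ one has $\bz_{\overline{S}} = \bv_{\overline{S}}$. The decomposition then gives $R(\bz) = R(\bx_S + \bv_S) + R(\bv_{\overline{S}})$. To bound the first term from below I would apply subadditivity in the \emph{reverse} direction: writing $\bx_S = (\bx_S + \bv_S) + (-\bv_S)$ and using $R(-\bv_S) = R(\bv_S)$ yields $R(\bx_S + \bv_S) \ge R(\bx_S) - R(\bv_S)$. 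Combining these gives $R(\bz) \ge R(\bx) + \big(R(\bv_{\overline{S}}) - R(\bv_S)\big)$, and \eqref{iNSP:sep} forces the parenthesised quantity to be strictly positive, so $R(\bz) > R(\bx)$. Hence $\bx$ is the unique minimizer.

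For necessity I would argue by contraposition. If \eqref{iNSP:sep} fails, there exist $\bv \in \ker(\bA)\setminus\{\mathbf{0}\}$ and a set $S$ with $\#(S) \le s$ satisfying $R(\bv_S) \ge R(\bv_{\overline{S}})$. Set $\bx := \bv_S$, which is $s$-sparse, and $\bz := -\bv_{\overline{S}}$. Because $\bA\bv = \mathbf{0}$ implies $\bA\bv_S = -\bA\bv_{\overline{S}}$, the vector $\bz$ is feasible for \eqref{problem:PR} with target $\bx$; moreover $\bz \ne \bx$, since the two are supported on the disjoint sets $\overline{S}$ and $S$ while $\bv \ne \mathbf{0}$. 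By symmetry $R(\bz) = R(\bv_{\overline{S}}) \le R(\bv_S) = R(\bx)$, so $\bx$ is not the unique solution of \eqref{problem:PR}, contradicting the recovery hypothesis.

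The manipulation of restrictions is routine; the one step demanding care—and the main conceptual point—is recognising that subadditivity must be invoked in the reverse direction to obtain the lower bound $R(\bx_S+\bv_S) \ge R(\bx_S) - R(\bv_S)$. This is precisely where separability, and hence Lemma \ref{prop:triangle_ineq} rather than concavity alone, becomes indispensable: as Remark \ref{remark:subad} shows, for non-separable penalties both the decomposition and this inequality can fail, which is why the present theorem is confined to the separable setting.
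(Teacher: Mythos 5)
Your proposal is correct and follows essentially the same route as the paper: the sufficiency direction uses the identical decomposition $R(\bx+\bv)=R(\bx+\bv_S)+R(\bv_{\overline{S}})$ together with the reverse subadditivity bound from Lemma \ref{prop:triangle_ineq}, and your contrapositive argument for necessity is the same construction (compare $\bv_S$ against the feasible competitor $-\bv_{\overline{S}}$) that the paper states directly. Your explicit justification that $r_j(0)=0$ for every $j$ is a welcome detail the paper leaves implicit.
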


\begin{proof}
It is enough to show that for index set $S\subset \{1,\ldots,N\}$, every vector $\bm{x}\in \mathbb{R}^N$ supported in $S$ is the unique solution to \eqref{problem:PR} if and only if 
$$
\ker(\bA)\setminus\{\mathbf{0}\}\subset \{ \bv \in \mathbb{R}^N: 
R(\bv_S)  < R(\bv_{\overline{S}})\}.
$$

First, assume that every vector $\bm{x}\in \mathbb{R}^N$ supported in $S$ is the unique solution to \eqref{problem:PR}. For any $\bv\in \ker(\bA)\setminus \{\mathbf{0}\}$, $\bm{v}_S$ is the unique minimizer of $R(\bz)$ subject to $\bA \bz = \bA \bv_S$. Observe that $\bA(-\bv_{\oS}) = \bA\bv_S$ and $-\bv_{\oS} \ne \bv_S$ (since $\bv\ne \mathbf{0}$), we have $R(\bv_S) < R(\bv_{\oS})$. 

Conversely, assume $\ker(\bA)\setminus\{\mathbf{0}\}\subset \{ \bv \in \mathbb{R}^N: 
R(\bv_S)  < R(\bv_{\overline{S}})\}$. Let $\bx\in \mathbb{R}^N$ be a vector supported in $S$. Any other solution to $\bA \bz = \bA \bx$ can be represented as $\bz = \bx + \bv$ with $\bv \in \ker(\bA)\setminus\{\textbf{0}\}$. We have by the separable property of $R$ and Lemma \ref{prop:triangle_ineq}
\begin{gather}
\label{sec:sepa:est1}
\begin{aligned}
R(\bx + \bv)   &= R(\bx + \bv_S) + R(\bv_{\oS}) \ge {R(\bx) - R(\bv_S)} + R(\bv_{\oS}) > R(\bx),
\end{aligned}
\end{gather}
thus $\bx$ is the unique solution to \eqref{problem:PR}.
\end{proof}

{
\begin{remark}
\eqref{iNSP:sep} is actually a necessary condition for the exact recovery of every $s$-sparse vector even when the separable and concave property on $R$ is removed, thus applicable to a very general class of penalty functions. For this condition to become sufficient, the separable assumption on $R$ is critical. In estimate \eqref{sec:sepa:est1}, we utilize this assumption in two ways: splitting $R(\bx+ \bv)$ into $R(\bx + \bv_S) + R(\bv_{\oS})$, and bounding $R(\bx + \bv_S) \ge {R(\bx) - R(\bv_S)} $ via the subadditivity of $R$. Without the separable property, a concave penalty may not be subadditive. For example, consider $R_{\ell_1 - \ell_2}:\mathbb{R}^2 \to [0,\infty)$, corresponding to $\ell_1-\ell_2$ regularization. $R_{\ell_1 -\ell_2}$ is not separable and also not subadditive, as one has  
$$
2 - \sqrt{2} = R_{\ell_1 -\ell_2}(1,1) > R_{\ell_1 -\ell_2}(1,0) + R_{\ell_1 -\ell_2}(0,1) = 0. 
$$
In this case, the analysis for the sufficient condition follows a significantly different and more complicated path (see Section \ref{sec:nonseparable}).
\end{remark}

It is worth emphasizing that the necessary and sufficient condition \eqref{iNSP:sep} for concave and separable penalties is not necessarily less demanding than \eqref{NSP:l1}, as shown in the below example. 

\begin{example}
\label{example:nonsymmetric}
Consider the underdetermined system 
$
\bA\bz = \bA\bx,
$
where the matrix $\bA\in \mathbb{R}^{4\times 5}$ is defined as 
\[ \bA =  \left( \begin{array}{ccccc}
 1 & \frac12 & \frac94  & 0 & 0 \\
 1 & -\frac12 & 0 & \frac34 & 0 \\
 0 & 1 & 0 & 0 & \frac43 \\
 1 & -1 & 0 & 0 & 0 
 \end{array} \right).  
 \]
 As $\ker(\bA) = (-t,-t,2t/3,2t/3,3t/4)^{\top}$ satisfies \eqref{NSP:l1} with $N =5$ and $s = 2$, one can successfully reconstruct all $2$-sparse vectors using $\ell_1$ minimization. Consider the weighted $\ell_1$ regularization 
 $$
 R_{w\ell_1} (\bz) =  4 |z_1| + 3 |z_2| + |z_3| + |z_4| + |z_5|, 
 $$
 which is normally perceived as a convex penalty but also concave according to Definition \ref{def:penalty_prop}. $R_{w\ell_1}$ is not symmetric, and we can see that not every $2$-sparse vector can be recovered using this penalty, especially those {whose first two entries are nonzero}. For instance, if $\bx = (1,1,0,0,0)^{\top}$, all solutions to $
\bA\bz = \bA\bx
$ can be represented as $\bz = (1-t,1-t,2t/3,2t/3,3t/4)$. Among these, the solution that minimizes $R_{w\ell_1}(\bz)$ is $(0,0,2/3,2/3,3/4)$, different from $\bx$ and not the sparsest one. Nevertheless, it should be noted that weighted $\ell_1$ minimization, with appropriate choices of weights can be a very efficient approach that outperforms $\ell_1$,  
in the case when a priori knowledge of (the structure of) the support set is available, see, e.g., \cite{FMSY12,YB13,ChkifaDexterTranWebster15,Adcock15b}. 
\end{example}

Finally, \eqref{iNSP:sep} is truly less demanding than \eqref{NSP:l1} if, in addition, the penalty function is symmetric. The following result verifies that separable, concave, and symmetric regularizations are superior to $\ell_1$ minimization in sparse, uniform recovery. 
}

\begin{proposition}
Let $N>1,\, s\in \mathbb{N}$ with $1 \le  s <  N/2 $. Assume $R$ is a function from $\mathbb{R}^N$ to $[0,\infty)$ satisfying $R(z_1,\ldots,z_N)$ $= R(|z_1|,\ldots,|z_N|),\ \forall \bm{z} = (z_1,\ldots,z_N)\in\mathbb{R}^N$, separable and symmetric on $\mathbb{R}^N$, and concave on $\cU$. Also, $R( \mathbf{0}) = 0$ and $R(\bz) > 0,\, \forall \bz \ne \mathbf{0}$. Then 
\begin{align*}
& \bigg\{ \bv \in \mathbb{R}^N: 
\|\bv_S\|_1  < \|\bv_{\overline{S}}\|_1,\, \forall S\subset \!\{1,\ldots,N\}\mbox{ with }\#(S)\le s\bigg\}
 \subset 
\\
& \qquad \qquad \bigg\{ \bv \in \mathbb{R}^N: \,
R(\bv_S)  < R(\bv_{\overline{S}}),\, \forall S\subset \!\{1,\ldots,N\}\mbox{ with }\#(S)\le s\bigg\}. 
\end{align*}
Consequently, \eqref{iNSP:sep} is less demanding than \eqref{NSP:l1}. 
\end{proposition}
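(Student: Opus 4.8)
The plan is to show the set inclusion directly: take any $\bv$ in the left-hand set and verify that it lies in the right-hand set. So fix $\bv \in \mathbb{R}^N$ satisfying $\|\bv_S\|_1 < \|\bv_{\overline{S}}\|_1$ for every $S$ with $\#(S) \le s$, and fix an arbitrary $S$ with $\#(S) \le s$; the goal is to prove $R(\bv_S) < R(\bv_{\overline{S}})$. The key idea is to exploit the majorization-reversing property established in Proposition \ref{lemma:majorize}: since $R$ is symmetric and concave on $\cU$, it is Schur-concave, so it suffices to compare $|\bv_S|$ and $|\bv_{\overline{S}}|$ (padded to length $N$ with zeros) through the majorization order, together with strict monotonicity coming from $R(\bz) > 0$ for $\bz \ne \mathbf{0}$.

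The main structural observation I would use is that $\bv_S$ has at most $s$ nonzero entries while $\bv_{\overline{S}}$ has at least $N - s > s$ available slots, and crucially $\|\bv_S\|_1 < \|\bv_{\overline{S}}\|_1$. First I would reduce to nonnegative vectors by symmetry and the evenness property $R(\bz) = R(|\bz|)$, so we may work in $\cU$ with $|\bv_S|$ and $|\bv_{\overline{S}}|$. The plan is to construct an intermediate vector $\bw \in \cU$ with the same $\ell_1$ norm as $|\bv_{\overline{S}}|$ such that $|\bv_S| \prec \bw$ (majorization) while $\bw \le |\bv_{\overline{S}}|$ componentwise after a suitable rearrangement. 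Concretely, since $\|\bv_S\|_1 < \|\bv_{\overline{S}}\|_1$, I can scale or pad: form $\bw$ by taking the entries of $|\bv_S|$ and adding to them (or to fresh coordinates) the mass $\|\bv_{\overline{S}}\|_1 - \|\bv_S\|_1$ so that $\bw$ has total mass equal to $\|\bv_{\overline{S}}\|_1$ and $|\bv_S| \le \bw$ componentwise on the support of $\bv_S$. Then increasingness (Lemma \ref{lem:incr_prop}, in the strong sense of Remark \ref{remark:inc}) gives $R(\bv_S) \le R(\bw)$, and majorization-reversal gives $R(\bw) \ge R(\bv_{\overline{S}})$ or the reverse — the directions must be arranged carefully.

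The delicate point, and the main obstacle, is getting a \emph{strict} inequality and orienting the majorization correctly. A sparse vector with small $\ell_1$ mass does not automatically majorize or get majorized by a denser vector with larger mass, since majorization requires equal total sums. The fix is to exploit the strict gap in $\ell_1$ norm: I would first increase $\bv_S$ to a vector $\bw$ with total mass exactly $\|\bv_{\overline{S}}\|_1$, invoking strict monotonicity (from $R(\bz) > 0$ for $\bz \neq \mathbf{0}$, one upgrades the nonstrict increasingness to strict increasingness whenever a coordinate strictly increases) to obtain $R(\bv_S) < R(\bw)$. Then with $\bw$ and $|\bv_{\overline{S}}|$ sharing the same $\ell_1$ norm, I claim $\bw \succ |\bv_{\overline{S}}|$ because $\bw$ was built by concentrating the mass of $\bv_S$ into at most $s$ coordinates, whereas $|\bv_{\overline{S}}|$ spreads its mass over $N - s$ coordinates; more concentrated vectors majorize more spread ones. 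Schur-concavity then yields $R(\bw) \le R(\bv_{\overline{S}})$, and chaining gives $R(\bv_S) < R(\bw) \le R(\bv_{\overline{S}})$, as required. The consequence that \eqref{iNSP:sep} is less demanding than \eqref{NSP:l1} follows immediately: if $\ker(\bA) \setminus \{\mathbf{0}\}$ is contained in the left-hand set, the inclusion places it in the right-hand set, which is exactly the condition \eqref{iNSP:sep}.
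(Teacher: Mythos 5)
Your overall strategy---pad $\bv_S$ with the missing mass $\alpha=\|\bv_{\overline{S}}\|_1-\|\bv_S\|_1$ to obtain a vector $\bw$ of equal $\ell_1$ norm, gain strictness from the padding, and then compare $\bw$ with $|\bv_{\overline{S}}|$ via Schur-concavity---is the same as the paper's, but there is a genuine gap at the decisive step: the claim $\bw \succ |\bv_{\overline{S}}|$ is false for an \emph{arbitrary} $S$ with $\#(S)\le s$. Majorization requires in particular $w_{[1]} \ge |\bv_{\overline{S}}|_{[1]}$, and nothing prevents $\overline{S}$ from containing a component of $\bv$ strictly larger than every entry of $\bv_S$ and than $\alpha$. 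Concretely, take $N=5$, $s=2$, $\bv=(0.8,\,0.8,\,1,\,0.6,\,0.6)$; this lies in the left-hand set, since its two largest entries sum to $1.8<\|\bv\|_1/2=1.9$. For $S=\{1,2\}$ one gets $\alpha=0.6$, so $\bw$ (with the mass placed on a fresh coordinate) sorts to $(0.8,0.8,0.6,0,0)$ while $|\bv_{\overline{S}}|$ sorts to $(1,0.6,0.6,0,0)$; the first partial sums give $0.8<1$, so $\bw\not\succ|\bv_{\overline{S}}|$. The heuristic ``more concentrated vectors majorize more spread ones'' is only valid when every entry of the concentrated vector dominates every entry of the spread one, which need not hold for a general $S$.

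The paper's proof repairs exactly this point: it first reduces to the single worst-case set $S^{\star}$ of indices of the $s$ largest components of $|\bv|$ (a reduction that uses separability, symmetry, and the increasingness from Lemma \ref{lem:incr_prop}), and only then forms $\tilde{\bv}=\bv_{S^{\star}}+\alpha\be_j$ with $j\in\overline{S^{\star}}$. By the choice of $S^{\star}$, every nonzero entry of $\bv_{S^{\star}}$ dominates every entry of $\bv_{\overline{S^{\star}}}$, which is what makes $\tilde{\bv}\succ\bv_{\overline{S^{\star}}}$ actually verifiable. A second, smaller issue: your assertion that ``$R(\bz)>0$ for $\bz\ne\mathbf{0}$ upgrades increasingness to strict increasingness whenever a coordinate strictly increases'' is not correct as stated (capped $\ell_1$ is positive off the origin yet constant above the cap), so adding the mass ``to the entries of $|\bv_S|$,'' one of the options you leave open, can lose strictness. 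The paper gets the strict inequality $R(\bv_{S^{\star}})<R(\tilde{\bv})$ from separability, via $R(\tilde{\bv})=R(\bv_{S^{\star}})+R(\alpha\be_j)$, i.e., by turning a \emph{zero} coordinate into a nonzero one; the extra mass must go onto a fresh coordinate.
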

\begin{proof}
Let $\bv \in \mathbb{R}^N$ satisfy $\|\bv_S\|_1  < \|\bv_{\overline{S}}\|_1,\, \forall S\subset \!\{1,\ldots,N\}${ with }$\#(S)\le s$. We denote by $S^\star$ the set of indices of $s$ largest components of $\bv$ (in magnitude), then $\|\bv_{S^{\star}}\|_1  < \|\bv_{\overline{S^{\star}}}\|_1$. Since $R$ is concave on $\cU$, by Lemma \ref{lem:incr_prop}, $R$ is increasing on $\cU$. It is enough to prove that $R(\bv_{S^{\star}})  < R(\bv_{\overline{S^{\star}}})$. Let $\alpha = \|\bv_{\overline{S^{\star}}}\|_1 - \|\bv_{S^{\star}}\|_1 > 0$ and $j$ be an index in $\overline{S^{\star}}$, we define 
$
\tilde{\bv} = \bv_{{S^{\star}}} + \alpha \be_j. 
$
Since $R$ is separable and $R(\bz) > 0,\, \forall \bz \ne \mathbf{0}$, one has 
\begin{align}
R(\tilde{\bv}) = R( \bv_{{S^{\star}}} ) +  R( \alpha \be_j ) > R( \bv_{{S^{\star}}} ) . \label{sec:sepa:est2}
\end{align} 
On the other hand, $\|\tilde{\bv}\|_1 =  \|{\bv_{\overline{S^{\star}}}}\|_1$ and, by the definition of $S^{\star}$, any nonzero component of $\tilde{\bv}$ (with the possible exception of the $j$-th one) is larger than any component of $\bv_{\overline{S^{\star}}}$. This yields $\tilde{\bv} \succ  {\bv_{\overline{S^{\star}}}}$. Applying Lemma \ref{lemma:majorize}, there follows $R(\tilde{\bv}) \le R({\bv_{\overline{S^{\star}}}})$. Combining with \eqref{sec:sepa:est2}, the proposition is concluded. 
\end{proof}

\section{Concluding remarks}
\label{sec:conclusion}
In this effort, we establish theoretical, generalized sufficient conditions for the uniform recovery of sparse signals via concave, non-separable and symmetric regularizations. These conditions are proved less restrictive than the standard null space property for $\ell_1$ minimization, thus verifying that concave, non-separable and symmetric penalties are better than or at least as good as $\ell_1$ in enhancing the sparsity of the solutions. Our work unifies and improves existing NSP-based conditions developed for several specific penalties, and also provides first theoretical recovery guarantees in some cases. 

Extending the present results to the more practical scenarios, which allows measurement errors and compressible (i.e., only close to sparse) signals  is the next logical step. In particular, an important open question is: are concave and symmetric regularizations still provably better than $\ell_1$ in uniform recovery, when taking noise and sparsity defect into account? Also, the general sufficient conditions for non-separable penalties, established herein from that of $\ell_1$, may be suboptimal for specific penalties. It will be interesting to investigate the specialized and optimized conditions in such cases. Finally, while the advantage of nonconvex minimizations over $\ell_1$ in terms of null space property is obvious, how this advantage reflects itself in sample complexity is unclear to us and a topic for future work. 

\bibliographystyle{spmpsci}      
\bibliography{nonconvex_R2}   
\end{document}